\newtheorem{theorem}{Theorem}[section]
\theoremstyle{plain}
\newtheorem{corollary}[theorem]{Corollary}
\newtheorem{defi}[theorem]{Definition}
\newtheorem{example}[theorem]{Example}
\newtheorem{lemma}[theorem]{Lemma}
\newtheorem{prop}[theorem]{Proposition}
\numberwithin{equation}{section}
\def\what{\widehat}
\def\mfr{{\mathfrak m}}
\def\FrS{{\mathfrak S}}
\def\One{{1\!\!1}}
\def\wtil{\widetilde}
\def\half{\frac{1}{2}}
\newcommand{\lam}{\lambda}
\newcommand{\gam}{\gamma}
\newcommand{\om}{\omega}
\newcommand{\sig}{\sigma}
\newcommand{\R}{{\mathbb R}}
\newcommand{\Q}{{\mathbb Q}}
\newcommand{\Z}{{\mathbb Z}}
\newcommand{\C}{{\mathbb C}}
\def\N{{\mathbb N}}
\newcommand{\Nat}{{\mathbb N}}
\def\wt{\widetilde}
\def\A{{\mathcal A}}
\def\Ik{{\mathcal I}}
\def\Sf{{\sf S}}
\def\T{{\mathbb T}}
\def\bxi{{\mathbf \xi}}
\def\bz{{\mathbf z}}
\def\one{\vec{1}}
\def\be{\begin{equation}}
\def\ee{\end{equation}}
\newcommand{\Ek}{{\mathcal E}}
\newcommand{\eps}{{\varepsilon}}
\newcommand{\es}{\emptyset}
\def\und{\underline}
\def\Cc{{\mathscr M}}
\def\Mc{{\mathscr M}}
\def\a{a}
\def\ve1{\vec{1}}
\def\Ak{{\mathcal A}}
\def\bn{{\bf n}}
\def\bt{{\bf t}}
\begin{document}

\title{On substitution automorphisms with pure singular spectrum}

\author{Alexander I. Bufetov}
\address{Alexander I. Bufetov\\ 
Aix-Marseille Universit{\'e}, CNRS, Centrale Marseille, I2M, UMR 7373\\
39 rue F. Joliot Curie Marseille France }
\address{Steklov  Mathematical Institute of RAS, Moscow}
\address{Institute for Information Transmission Problems, Moscow}
\email{alexander.bufetov@univ-amu.fr, bufetov@mi.ras.ru}
\author{Boris Solomyak }
\address{Boris Solomyak\\ Department of Mathematics,
Bar-Ilan University, Ramat-Gan, Israel}
\email{bsolom3@gmail.com}

\begin{abstract}
A sufficient condition for a substitution automorphism to have pure singular spectrum is given in terms of the top Lyapunov exponent of the associated spectral cocycle. 
As a corollary, singularity of  the spectrum is established for an infinite family of self-similar interval exchange transformations.
 \end{abstract}

\date{\today}

\keywords{Substitution dynamical system; spectral cocycle; singular spectrum}

\maketitle

\thispagestyle{empty}

\section{Introduction}
This paper is devoted to the spectral theory of  substitution systems.  Our main result is an explicit construction of a family of substitution automorphisms with purely singular spectrum.  In \cite{BuSo14} we gave a formula for spectral measures of ``cylindrical'' functions in the form of generalized matrix Riesz products. 
%We then obtained  H\"older continuity of spectral measures for generic $\R$-actions (suspension flows with a piecewise constant roof function) and log-H\"older estimates for self-similar flows, arising from non-Pisot substitutions.
In  \cite{BuSo19} we introduced a {\em spectral cocycle}, which is a generalization of the Riesz products mentioned above, not just for substitutions, but also for $S$-adic systems and generic translation flows on flat surfaces, and obtained formulas and estimates for the dimensions of spectral measures in terms of Lyapunov exponents. As a corollary, we obtained a sufficient condition for singularity of spectral measures for substitution $\R$-actions. Note, however, that there is in general no relation between the spectral type  of a transformation and that of a suspension flow over this transformation: for example, Kolmogorov 
constructed mixing suspension flows over circle rotations. Our previous results for $\R$-actions consequently do not give any information   about the spectrum of substitution $\Z$-actions.
This paper introduces a new idea: namely, using the spectral cocycle, we obtain the  singularity of the spectral type for substitution $\Z$-actions from  results on asymptotic equidistribution
under the action of ergodic toral endormorphisms, due to Host \cite{Host} and to Meiri \cite{Meiri}, see Section 3. Our main result, Theorem 2.4, states that if $\zeta$ is a primitive aperiodic substitution on two or more symbols such that the substitution matrix $\Sf_\zeta$ is irreducible and the logarithm of its Perron-Frobenius eigenvalue is strictly larger than twice the top Lyapunov exponent of the spectral cocycle, then the substitution $\Z$-action has pure singular spectrum. We apply our sufficient condition to an explicit family of examples, including those associated with self-similar interval exchange transformations of Salem type that are related to pseudo-Anosov diffeomorphisms, see Section 5.2.
At the same time, it is not clear whether our sufficient condition is necessary, and determining the spectral type of general substitution actions remains an intriguing open problem. 

Spectral properties of substitutions are still far from being understood completely, especially in the non-constant length case. Substitution systems are never strongly mixing \cite{DK}, hence their spectrum has a singular component. Although an absolutely continuous component seems to be rare (Rudin-Shapiro substitution, see \cite{Queff}, and its generalizations, due to Frank \cite{Frank}, and more recent ones of Chan and Grimm \cite{CG2}, are all of constant length), it is a non-trivial question to decide when the spectrum is purely singular. In the constant length there is a criterion of Queffelec \cite{Queff}, but few general results.
Abelian bijective substitutions is one class for which it is known that the spectrum is always singular, see \cite{Queff,Bartlett,BG}.  Berlinkov and Solomyak \cite{BerSol} proved that in order to have an absolutely continuous component for the $\Z$-action arising from a constant-length substitution, the substitution matrix must have $\sqrt{\theta_1}$ as an eigenvalue, where $\theta_1$ is its Perron-Frobenius eigenvalue. This is, however, not a necessary condition, as shown by Chan and Grimm \cite{CG}; see also the comments at the end of \cite[Section 3.5]{BG2M}.

In the non-constant length case, new results on singular spectrum for self-similar $\R$-actions were recently obtained by Baake and collaborators \cite{BFGR,BGM,BG2M}, based on estimates of upper Lyapunov exponents for a different version of the cocycle. More specifically, Baake, Frank, Grimm, and Robinson \cite{BFGR} proved singularity of the spectrum for the self-similar suspension in the case of the non-Pisot substitution $0\mapsto 0111, 1\mapsto 0$, which was a first result in this direction and introduced important new ideas. Instead of dimension estimates, they used functional relations obtained from the Riesz product representation of a spectral measure. This result was extended by Baake, Grimm, and Ma\~nibo \cite{BGM} to the substitutions $0\mapsto 01^\ell, 1\mapsto 0$, for $\ell\ge 3$. Their method was further developed and extended to higher dimensional tiling systems by Baake, G\"ahler, Grimm, and Ma\~nibo \cite{BG2M}. However, the singularity of the dynamical spectrum for the corresponding $\Z$-actions remained open. 
 
The  paper is organized as follows. In the next section we recall the background on substitutions, including the definition and basic properties of the spectral cocycle, followed by statement of results. The proofs of main theorems are given in Section 3.  In Section 4 we prove the lemmas and include short proofs of some known results for completeness, and Section 5 is devoted to examples.
{ An Appendix provides the needed facts about the Mahler measure of a polynomial.}

%%%%%%%%%%%%%%%%%%%%%%%%%%%%%%%%%%%%%%%

\section{Background and statement of result}

\subsection{Substitution dynamical systems}
The standard references for the basic facts on substitution dynamical systems are \cite{Queff,Siegel}.
Consider an alphabet of $d\ge 2$ symbols $\Ak=\{0,\ldots,d-1\}$. Let  $\A^+$ be the set of nonempty words with letters in $\A$. 
A {\em substitution}  is a map $\zeta:\,\A\to \A^+$, extended to 
 $\A^+$ and $\A^{\N}$ by
concatenation. The {\em substitution space} is defined as the set of bi-infinite sequences $x\in \A^\Z$ such that any word  in $x$
appears as a subword of $\zeta^n(\a)$ for some $\a\in \A$ and $n\in \N$. The {\em substitution dynamical system}  is the left
shift on $\A^\Z$ restricted to $X_\zeta$, which we denote by $T$.

The {\em substitution matrix} $\Sf=\Sf_\zeta=(\Sf(i,j))$ is the $d\times d$ matrix, such that $\Sf(i,j)$ is the number
of symbols $i$ in $\zeta(j)$. The substitution is {\em primitive} if $\Sf_\zeta^n$ has all entries strictly positive for some $n\in \Nat$.
It is well-known that primitive substitution $\Z$-actions are minimal and uniquely ergodic, see \cite{Queff}.
We assume that the substitution is primitive and {\em non-periodic}, which in the primitive case is equivalent to the space $X_\zeta$ being infinite.
The length of a word $u$ is denoted by $|u|$. The substitution $\zeta$ is said to be of {\em constant length} $q$ if $|\zeta(a)|=q$ for all $a\in \A$, otherwise, it is of {\em non-constant length}.

\subsection{The spectral cocycle}
 A standing assumption will be that $\Sf_\zeta$ is invertible. 
We next recall the definition of the spectral cocycle from \cite{BuSo19}.
Write $\bz = (z_0,\ldots,z_{d-1})$ and $\bz^v = z_{v_0}z_{v_1}\ldots z_{v_k}$ for a word $v = v_0v_1\ldots v_k\in \A^k$.
%Consider  the toral endomorphism $\xi\mapsto \Sf_\zeta^t \,\xi\ (\mbox{mod}\ \Z^m),\ \xi \in \T^m = \R^m/\Z^m$,  induced by the transpose substitution matrix. 
Suppose that
$$\zeta(b) = u_1^{b}\ldots u_{|\zeta(b)|}^{b},\ \ b\in \Ak.$$
Define a matrix-function on $\T^d$ of polynomials in $\bz$-variables by
$$
\Mc_\zeta(\bz) = [\Mc_\zeta(z_0,\ldots,z_{d-1})]_{b,c} = \Bigl( \sum_{j\le |\zeta(b)|,\ u_j^{b} = c} \bz^{u^b_1\ldots u^b_{j-1}} \Bigr)_{(b,c)\in \A^2},\ \ \ \bz\in \T^d,
$$
where $j=1$ corresponds to $\bz^{\es}=1$. 

Whereas the $\bz$-notation is helpful, we will actually need to lift $\Mc_\zeta$ to the universal cover, in other words, write
$\bz = \exp(-2 \pi i \bxi)$, where $\xi = (\xi_0,\ldots,\xi_{d-1})$. Thus we obtain a $\Z^d$-periodic matrix-valued function
 function, which we denote by the same letter.
 
 \begin{defi} \label{def-matrix} Let
  $\Cc_\zeta:  \R^d\to M_d(\C)$  (the space of complex $d\times d$ matrices): 
 \be \label{coc0}
\Cc_\zeta(\xi) = [\Cc_\zeta(\xi_1\ldots,\xi_m)]_{(b,c)} := \Bigl( \sum_{j\le |\zeta(b)|,\ u_j^{b} = c} \exp\bigl(-2\pi i \sum_{k=1}^{j-1} \xi_{u_k^{b}}\bigr)\Bigr)_{(b,c)\in \A^2},\ \ \ \xi\in \R^d.
\ee
%Note that  $\Cc_\zeta$ is $\Z^d$-periodic, so we obtain a continuous matrix-function on the torus, which we denote, by a slight abuse of notation, by the same letter:
%$\Cc_\zeta: \T^d\to M_d(\C)$.
\end{defi}

\noindent {\bf Example.}
Consider the substitution $\zeta: 0\mapsto 000 1,\ 1\mapsto 012,\ 2 \mapsto 1$. Then
$$
\Mc_{\zeta}(\zeta_0,\zeta_1,\zeta_2) = \left( \begin{array}{ccc} 1 + z_0 + z_0^2 & z_0^3 & 0 \\ 1 & z_0 & z_0 z_1 \\ 0 & 1 & 0 \end{array} \right), \ \ \ z_j = e^{-2\pi i \xi_j}.
$$

\medskip

Observe that
\begin{itemize}
\item[(i)] $\Mc_\zeta(0) = \Sf_\zeta^t$;
the entries of the matrix $\Mc_\zeta(\xi)$ are trigonometric polynomials with coefficients 0's and 1's that are $\le$ the corresponding entries of $\Sf^t_\zeta$ in absolute value for every $\bz\in \T^d$;
\item[(ii)] {\em cocycle condition}:
 for any substitutions $\zeta_1,\zeta_2$ on $\Ak$,
\be \label{most}
\Cc_{\zeta_1\circ \zeta_2}(\xi) = \Cc_{\zeta_2}(\Sf^t_{\zeta_1}\xi)\Cc_{\zeta_1}(\xi),
\ee
which is verified by a direct computation.
\end{itemize}

\begin{defi}
Consider the  endomorphism of the torus $\T^d$
\begin{equation}\label{torend}
E_\zeta: \xi \mapsto \Sf_\zeta^t \xi \  (\mathrm{mod} \   \Z^d),
\end{equation} 
 which preserves the Haar measure $m_d$, and define 
 \be \label{cocycle3}
\Cc_\zeta(\xi,n):= \Cc_\zeta\bigl(E_\zeta^{n-1}\xi \bigr)\cdot \ldots \cdot \Cc_\zeta(\xi),
\ee
 a complex matrix cocycle over the endomorphism \eqref{torend}.
 \end{defi}
 
  Note that (\ref{most}) implies
\be \label{coc2}
\Mc_\zeta(\xi,n) =\Cc_{\zeta^n}(\xi),\ \ n\in \N.
\ee

\noindent {\bf Remark.}  The matrix-function $\Cc_{\zeta^n}(\xi)$, with $\xi = \om\vec s$,  was originally introduced in our paper \cite[(4.15)]{BuSo14}, denoted ${\bf M}_n^{\vec s}(\om)$ there.
The matrix $\Mc_\zeta$ appeared in the papers
\cite{BFGR,BGM,BG2M} as a {\em Fourier matrix}, denoted $B(\wtil{k})$; it was further used to define a cocycle on  $\R$, which is the restriction of $\Mc_\zeta(\xi,n)$ to the line in the direction of the Perron-Frobenius eigenvector for $\Sf_\zeta^t$. See Section~\ref{sec-disc} for a more detailed discussion.

\subsection{Formulation of the main result}
 Denote by $\|\cdot\|$  a matrix norm on $M_d(\C)$. In what follows, we can use any norm, since all such norms are equivalent. It will often be convenient for us to use the {\em Frobenius norm}, 
defined by 
$${\|A\|}_{\rm F}^2 = \sum_{i,j} |a_{ij}|^2,$$
which satisfies ${\|AB\|}_F\le {\|A\|}_{\rm F}\cdot {\|B\|}_{\rm F}$. Let $m_d$ be the Haar measure on the torus $\T^d$.

{
\begin{lemma} \label{lem-integrab}
The function $\xi\mapsto \log \|\Mc_\zeta(\xi)\|$ is integrable over $(\T^d,m_d)$,  moreover,
\be \label{eq-integ}
\int_{\T^d} \log \|\Mc_\zeta(\xi)\|\,dm_d(\xi) \ge 0.
\ee
\end{lemma}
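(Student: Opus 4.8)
The plan is to deduce both assertions from an entrywise size estimate on $\Mc_\zeta$, combined with the elementary fact that a non-zero polynomial with integer coefficients has Mahler measure $M(P)\ge 1$ (recalled in the Appendix). The bound from above is immediate: each entry $[\Mc_\zeta(\xi)]_{b,c}$ is a sum of $\Sf_\zeta^t(b,c)$ unimodular terms, so $|[\Mc_\zeta(\xi)]_{b,c}|\le\Sf_\zeta^t(b,c)$ for every $\xi$ (observation (i) above); hence $\|\Mc_\zeta(\xi)\|_{\rm F}\le\|\Sf_\zeta\|_{\rm F}$ uniformly, and $\log\|\Mc_\zeta(\xi)\|$ is bounded above for the Frobenius norm, hence for any norm by equivalence. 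In particular the positive part of $\log\|\Mc_\zeta\|$ is integrable on the finite measure space $(\T^d,m_d)$, and it remains only to control $\log\|\Mc_\zeta\|$ where $\|\Mc_\zeta(\xi)\|$ is small.

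For the bound from below, fix a letter $b\in\A$ and let $c=u_1^{b}$ be the first letter of $\zeta(b)$. In \eqref{coc0} the $j=1$ term of the $(b,c)$ entry is $\bz^{\es}=1$, while every other term with $u_j^{b}=c$ is a monomial $\bz^{u_1^b\cdots u_{j-1}^b}$ of positive degree; thus $P_b(\bz):=[\Mc_\zeta(\xi)]_{b,c}$, with $\bz=e^{-2\pi i\xi}$, is a polynomial in $\bz$ with coefficients in $\{0,1\}$ and constant term $1$, in particular a non-zero polynomial with integer coefficients. Since the Frobenius norm dominates the modulus of each entry,
\[
\log\|\Mc_\zeta(\xi)\|_{\rm F}\ \ge\ \log\bigl|P_b(e^{-2\pi i\xi})\bigr|\qquad\text{for every }\xi\in\T^d .
\]
By the Appendix, $\int_{\T^d}\log|P_b(e^{-2\pi i\xi})|\,dm_d(\xi)=\log M(P_b)\ge 0$; since the right-hand side is finite, the negative part of $\xi\mapsto\log|P_b(e^{-2\pi i\xi})|$ — hence, by the displayed inequality, that of $\log\|\Mc_\zeta\|$ — is $m_d$-integrable. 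Together with the bound from above this gives integrability of $\log\|\Mc_\zeta\|$. Finally, integrating the displayed inequality,
\[
\int_{\T^d}\log\|\Mc_\zeta(\xi)\|_{\rm F}\,dm_d(\xi)\ \ge\ \int_{\T^d}\log\bigl|P_b(e^{-2\pi i\xi})\bigr|\,dm_d(\xi)\ =\ \log M(P_b)\ \ge\ 0 ,
\]
which is \eqref{eq-integ}.

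I expect no genuine obstacle: the only ingredient beyond bookkeeping is $\log M(P_b)\ge 0$, which is the point of the Appendix (it can in fact be sharpened to $\int_{\T^d}\log|P_b|\,dm_d\ge\log|P_b(0)|=0$, since the constant monomial sits at a vertex of the Newton polytope of $P_b$). An equivalent route replaces $P_b$ by $\det\Mc_\zeta$: one has $\|\Mc_\zeta(\xi)\|\ge c_d\,|\det\Mc_\zeta(\xi)|^{1/d}$ for a dimensional constant $c_d>0$, and $\det\Mc_\zeta$ is a non-zero integer polynomial — non-zero because $\det\Mc_\zeta|_{\xi=0}=\det\Sf_\zeta^t\ne 0$ — so $\int_{\T^d}\log|\det\Mc_\zeta|\,dm_d=\log M(\det\Mc_\zeta)\ge 0$ as well; this is the one place the standing invertibility of $\Sf_\zeta$ would enter, although it is not needed for the argument above. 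One caveat to keep in mind: the integrability of $\log\|\Mc_\zeta\|$ holds for every norm, whereas \eqref{eq-integ} is norm-dependent — it is stated for the Frobenius norm (and also holds, e.g., for the operator norm, and more generally for any submultiplicative norm under which every matrix unit has norm at most $1$).
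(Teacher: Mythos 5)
Your proof is correct and follows essentially the same route as the paper's: both reduce \eqref{eq-integ} to the fact \eqref{eq-Mahler3} that a non-zero polynomial with integer coefficients has logarithmic Mahler measure $\ge 0$. The only difference is that you apply this to a single well-chosen entry $P_b$ (with constant term $1$) together with the bound $\|\Mc_\zeta(\xi)\|_{\rm F}\ge |P_b|$, whereas the paper applies it directly to $\|\Mc_\zeta(\xi)\|_{\rm F}^2$, which is itself (the modulus of) a non-trivial integer trigonometric polynomial; your variant is, if anything, slightly more explicit about the integrability of the negative part.
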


\begin{proof}
Using the Frobenius norm, we obtain that ${\|\Mc_\zeta(\xi)\|}_{\rm F}^2$ is a non-trivial trigonometric polynomial in $d$ variables with integer coefficients, and hence
$$
\int_{\T^d} \log {\|\Mc_\zeta(\xi)\|}^2_{\rm F}\,dm_d(\xi) = \mfr(P),
$$
for some polynomial $P\in \Z[z_0,\ldots,z_{d-1}]$, where $\mfr(P)$ is the logarithmic Mahler measure, see \eqref{eq-Mahler}. Therefore, \eqref{eq-Mahler3} implies
\eqref{eq-integ}.
\end{proof}
}

Suppose that $\Sf_\zeta$ has no eigenvalues that are roots of unity. Then the  endomorphism $(\T^d,E_\zeta,m_d)$ is ergodic. By the
Furstenberg-Kesten Theorem \cite{FK}, the (top) Lyapunov exponent exists and is constant almost everywhere:
\be \label{eq-FK}
\exists\, \chi(\Cc_\zeta) = \lim_{n\to\infty} \frac{1}{n}\log\|\Cc_\zeta(\xi,n)\|\ \ \ \mbox{for a.e.}\ \xi\in \T^d.
\ee
%The Lyapunov exponent does not depend on the matrix norm.  

\begin{theorem} \label{thm1}
Let $\zeta$ be a primitive aperiodic substitution on $\Ak = \{0,\ldots,d-1\}$, with $d\ge 2$, such that  the substitution matrix $\Sf_\zeta$ has characteristic polynomial irreducible over $\Q$. Let $\theta_1$ be the Perron-Frobenius eigenvalue of $\Sf_\zeta$. If
\be \label{cond1}
\chi(\Cc_\zeta) < \frac{\log\theta_1}{2},
\ee
then the substitution $\Z$-action has pure singular spectrum.
\end{theorem}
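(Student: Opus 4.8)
\medskip
\noindent{\it Strategy of the proof.}
The plan is to derive singularity from the generalized matrix Riesz product representation of spectral measures of \cite{BuSo14}, combined with an upper bound -- valid for Lebesgue-almost every frequency -- on the growth of the spectral cocycle along the $E_\zeta$-orbit of the ``diagonal line''; this bound will in turn be obtained from equidistribution theorems for ergodic toral endomorphisms. Recall from \cite{BuSo14} that the spectral measure $\sigma_f$ of a cylindrical function $f$ is a weak-$*$ limit of absolutely continuous measures $g_n\,dm$, where $m$ is Lebesgue measure on $\T$ and
$$
g_n(\omega)=\theta_1^{-n}\,\bigl\|\Cc_\zeta(\omega\one,n)\,\vec{v}\bigr\|^2,\qquad \omega\in\T,
$$
for a fixed vector $\vec{v}$ depending on $f$, with $\sup_n\int_\T g_n\,dm<\infty$ (if \cite{BuSo14} is stated only for cylinders of length one, pass first to a power $\zeta^{n_0}$, which replaces $\theta_1$ and $\chi(\Cc_\zeta)$ by the same power and leaves \eqref{cond1} unchanged). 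Since cylindrical functions are dense in $L^2(X_\zeta,\mu)$ it suffices to prove $\sigma_f\perp m$ for each such $f$, and I will use the elementary fact that if $g_n\ge 0$, $\sup_n\int g_n\,dm<\infty$, $g_n\to 0$ for $m$-a.e.\ $\omega$, and $g_n\,dm\to\nu$ weakly-$*$, then $\nu\perp m$. (Given $\eps>0$, Egorov's theorem and inner regularity give a finite union of closed intervals $K$ with $m(\T\setminus K)<\eps$ on which $g_n\to0$ uniformly; then $\int\phi\,d\nu=\lim_n\int\phi g_n\,dm=0$ for every continuous $\phi$ with $0\le\phi\le\One_K$, so $\nu(\Int K)=0$; let $\eps\to0$.) Since $\|\Cc_\zeta(\omega\one,n)\vec{v}\|\le\|\vec{v}\|\,\|\Cc_\zeta(\omega\one,n)\|$, everything reduces to showing
$$
\limsup_{n\to\infty}\tfrac1n\log\bigl\|\Cc_\zeta(\omega\one,n)\bigr\|<\tfrac12\log\theta_1\qquad\text{for }m\text{-a.e.\ }\omega\in\T.
$$

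Fix $s\in\N$ so large that $\tfrac1s\int_{\T^d}\log\|\Cc_{\zeta^s}\|\,dm_d<\tfrac12\log\theta_1$; this is possible since, by \eqref{most} and the $E_\zeta$-invariance of $m_d$, the sequence $s\mapsto\int_{\T^d}\log\|\Cc_{\zeta^s}\|\,dm_d$ is subadditive, each term is finite by Lemma~\ref{lem-integrab} applied to $\zeta^s$, and the normalized limit equals $\chi(\Cc_\zeta)$ (cf.\ \eqref{eq-FK}), which by \eqref{cond1} is $<\tfrac12\log\theta_1$. For $n=ms+r$ with $0\le r<s$, the cocycle identity (immediate from \eqref{cocycle3}) writes $\Cc_\zeta(\xi,n)$ as the product of $\Cc_\zeta(E_\zeta^{ms}\xi,r)$ with the $m$ matrices $\Cc_\zeta(E_\zeta^{is}\xi,s)$, $0\le i<m$; by submultiplicativity of the norm and $\Cc_\zeta(\cdot,s)=\Cc_{\zeta^s}$ (see \eqref{coc2}),
$$
\tfrac1n\log\bigl\|\Cc_\zeta(\xi,n)\bigr\|\le\frac{C_s}{ms}+\frac1s\cdot\frac1m\sum_{i=0}^{m-1}\varphi\bigl(E_\zeta^{is}\xi\bigr),\qquad\varphi:=\log\|\Cc_{\zeta^s}\|,
$$
where $C_s:=\max_{0\le r<s}\sup_\eta\log\|\Cc_\zeta(\eta,r)\|<\infty$. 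Now $\varphi$ is bounded above and, being the logarithm of a continuous function, is a decreasing pointwise limit of bounded continuous functions, while $\varphi\in L^1(\T^d,m_d)$ by Lemma~\ref{lem-integrab}. Hence, whenever the orbit $(E_\zeta^{is}\xi)_{i\ge0}$ is equidistributed in $(\T^d,m_d)$, one gets $\limsup_m\tfrac1m\sum_{i<m}\varphi(E_\zeta^{is}\xi)\le\int_{\T^d}\varphi\,dm_d$, and therefore
$$
\limsup_{n\to\infty}\tfrac1n\log\bigl\|\Cc_\zeta(\xi,n)\bigr\|\le\tfrac1s\int_{\T^d}\log\|\Cc_{\zeta^s}\|\,dm_d<\tfrac12\log\theta_1.
$$

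It remains to prove that for $m$-a.e.\ $\omega\in\T$ the orbit $(E_\zeta^{is}(\omega\one))_{i\ge0}$ is equidistributed in $(\T^d,m_d)$; here the hypotheses on $\Sf_\zeta$ enter decisively. Let $\lambda_L$ be the pushforward of $m$ under $\omega\mapsto\omega\one$; it is carried by the rational one-dimensional subtorus $L=(\R\one+\Z^d)/\Z^d$. The toral endomorphism $F\colon\xi\mapsto 2\xi$ of $\T^d$ preserves $L$ and acts on it as doubling on $\T$, so $\lambda_L$ is $F$-invariant, $F$-ergodic, and $h_{\lambda_L}(F)=\log2>0$. On the other hand $G:=E_\zeta^s$ is an ergodic toral endomorphism, and the characteristic polynomial of $\Sf_\zeta^s$ is irreducible over $\Q$: since $\Sf_\zeta$ is primitive with irreducible characteristic polynomial, $\theta_1>1$ is its unique eigenvalue of maximal modulus, so every Galois conjugate $\theta_1'\neq\theta_1$ satisfies $|\theta_1'|<\theta_1$, hence $(\theta_1'/\theta_1)^s\neq1$ for all $s\ge1$, so $[\Q(\theta_1^s):\Q]=d$; in particular $\Sf_\zeta^s$ has no proper nonzero rational invariant subspace, so no proper subtorus of $\T^d$ contains all the lines $G^i(L)$ (a rational subspace containing all $(\Sf_\zeta^{st})^i\one$ is $\Sf_\zeta^{st}$-invariant, hence all of $\Q^d$). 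By the equidistribution theorems of Host \cite{Host} and Meiri \cite{Meiri} for ergodic toral endomorphisms -- applied to the positive-entropy ergodic measure $\lambda_L$ of $F$ and to the endomorphism $G$, whose required independence from $F$ is provided by the irreducibility just discussed -- for $\lambda_L$-a.e.\ $\xi$ the orbit $(G^i\xi)_{i\ge0}$ equidistributes with respect to $m_d$. This is precisely the statement needed; combined with the two preceding displays it yields $\sigma_f\perp m$ for every cylindrical $f$, and hence pure singular spectrum.

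I expect the main obstacle to be this last step: verifying that the precise hypotheses of Meiri's theorem (the $\T^d$-analogue of Host's normality theorem) hold for the measure $\lambda_L$, which is carried by a proper -- indeed rational -- subtorus, and for the pair $(F,G)$. It is here that primitivity (strict dominance of $\theta_1$, which guarantees both that $\lambda_L$ is genuinely positive-entropy ergodic for $F$ and that $\Sf_\zeta^s$ remains irreducible) and irreducibility of the characteristic polynomial of $\Sf_\zeta$ (which guarantees that $G$ leaves no proper subtorus invariant, so that it does spread $L$ out over all of $\T^d$) are used in an essential way.
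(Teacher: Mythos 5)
Your proposal follows essentially the same route as the paper: reduce singularity to the a.e.\ bound $\chi^+_\zeta(\om\vec 1)<\frac{1}{2}\log\theta_1$ along the diagonal (you get singularity from the weak-$*$ Riesz-product representation of \cite{BuSo14} plus an Egorov argument, where the paper instead uses the local dimension estimate $\und{d}(\sig_f,\om)\ge 2-2\chi^+_\zeta(\om\vec 1)/\log\theta_1$ from \cite[Prop.\,7.2]{BuSo14} -- both are legitimate packagings of the same input), and then obtain that bound from subadditivity together with Host's equidistribution theorem applied to $A=(\Sf_\zeta^t)^s$, a times-$p$ map $B=pI$ preserving Lebesgue measure on the diagonal, and the $\eps$-regularization $\log(\eps+\|\cdot\|)$ to handle the upper semicontinuity of $\log\|\Cc_{\zeta^s}\|$. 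The verification that $\Sf_\zeta^s$ retains an irreducible characteristic polynomial via dominance of $\theta_1$ is also the paper's Lemma~\ref{lem-irred}.

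There is one concrete slip. You fix $F:\xi\mapsto 2\xi$ and then assert that the ``required independence'' of $F$ from $G=E_\zeta^s$ ``is provided by the irreducibility just discussed.'' That is not what Host's theorem asks for: the hypothesis is the arithmetic condition $\gcd(\det A,\det B)=1$, i.e.\ $\gcd\bigl(\det(\Sf_\zeta^{s}),2^d\bigr)=1$, which fails whenever $\det\Sf_\zeta$ is even, and irreducibility of the characteristic polynomial has no bearing on it. (Your separate observation that no proper rational subtorus contains all the lines $G^i(L)$ is not a hypothesis of Host's theorem and does not substitute for the coprimality condition.) The fix is exactly the paper's: take $B=pI$ with $p\ge 2$ (say a prime) chosen not to divide $\det\Sf_\zeta$; the pushforward of Lebesgue measure on the diagonal is still $B$-invariant, ergodic, of entropy $\log p>0$, and the rest of your argument goes through verbatim. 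Also, Meiri's theorem is not needed here -- in the paper it only enters for the reducible two-letter case of Theorem~\ref{thm2}.
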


We can also treat some substitutions $\zeta$ on two symbols for which the characteristic polynomial of $\Sf_\zeta$ is reducible.

\begin{theorem} \label{thm2}
Let $\zeta$ be a primitive aperiodic substitution on $\Ak = \{0,1\}$ of non-constant length for which  $\Sf_\zeta$ has two integer eigenvalues, both greater than one in absolute value. If (\ref{cond1}) holds,
then the substitution $\Z$-action has pure singular spectrum.
\end{theorem}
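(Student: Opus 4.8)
The plan is to reduce Theorem \ref{thm2} to Theorem \ref{thm1} by passing to a suitable power of $\zeta$ and working with the spectral cocycle over the relevant toral endomorphism, accounting carefully for the fact that the characteristic polynomial of $\Sf_\zeta$ is now reducible. Since $d=2$ and $\Sf_\zeta$ has two integer eigenvalues $\theta_1 > \theta_2$ with $|\theta_2| > 1$ (and $\theta_1 > 1$ by primitivity, with $\theta_1>\theta_2$ forced by Perron--Frobenius), the matrix $\Sf_\zeta$ is diagonalizable over $\Q$ (or at worst over $\Z$ after a change of basis), and the endomorphism $E_\zeta$ of $\T^2$ has no eigenvalue that is a root of unity, so $(\T^2, E_\zeta, m_2)$ is still ergodic and the top Lyapunov exponent $\chi(\Cc_\zeta)$ exists a.e.\ by Furstenberg--Kesten. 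The key point where irreducibility was used in Theorem \ref{thm1} is in invoking the Host--Meiri equidistribution results for ergodic toral endomorphisms; I would check that in the reducible case with \emph{both} eigenvalues of modulus $>1$ (no eigenvalue on the unit circle, and no eigenvalue a root of unity), the endomorphism $E_\zeta$ is still not merely ergodic but has the stronger mixing/equidistribution properties needed. Concretely, for a $2\times2$ integer matrix with both eigenvalues off the unit circle, $E_\zeta$ is a hyperbolic toral endomorphism, hence exact and in particular mixing of all orders, and the relevant theorems of Host \cite{Host} and Meiri \cite{Meiri} on equidistribution of $\{\Sf_\zeta^{tn}\xi\}$ for $\xi$ irrational apply.

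First I would isolate, from the proof of Theorem \ref{thm1} in Section 3, exactly which steps use the irreducibility hypothesis, and replace each by the corresponding statement valid for our hyperbolic (reducible) $\Sf_\zeta$. The structure should be: for a spectral measure $\sigma_f$ of a cylindrical function $f$, the generalized Riesz product representation from \cite{BuSo14} together with the cocycle identity \eqref{coc2} expresses the relevant partial products in terms of $\Cc_{\zeta^n}(\xi) = \Cc_\zeta(\xi,n)$; the hypothesis \eqref{cond1}, namely $\chi(\Cc_\zeta) < \tfrac12\log\theta_1$, then forces the $L^1$-mass of these products, measured against $m_2$, to decay, and one concludes via an argument that a.c.\ components are impossible because along the orbit of $E_\zeta$ the cocycle norms are exponentially smaller than $\theta_1^{n/2}$, which is the normalization forced by $\|\Sf_\zeta^n\|\sim\theta_1^n$. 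Lemma \ref{lem-integrab} guarantees $\log\|\Mc_\zeta\|$ is integrable and has nonnegative integral, so $\chi(\Cc_\zeta)\ge 0$ and the inequality \eqref{cond1} is a genuine gap condition; this part is insensitive to reducibility.

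The main obstacle I anticipate is the following: in the reducible case the torus $\T^2$ may contain a proper $E_\zeta$-invariant rational subtorus (the line spanned by an integer eigenvector), and one must ensure that the spectral measures in question do not concentrate on such an invariant subtorus, where the equidistribution arguments of Host and Meiri would fail or degenerate. I would handle this by using the non-constant length hypothesis: if $\zeta$ has non-constant length, then the two entries of the Perron--Frobenius eigenvector (equivalently, the coordinates used in forming the relevant line direction) are rationally independent, which prevents the spectral measure from living on a rational subtorus, or else one checks directly that the cocycle restricted to the invariant subtorus still satisfies the required exponential estimate. The secondary technical point is to verify that passing to $\zeta^k$ (if needed to make $\Sf_\zeta$ diagonalizable over $\Z$, or simply to invoke the cocycle identity cleanly) does not change $\chi(\Cc_\zeta)/\log\theta_1$, which is immediate since $\chi(\Cc_{\zeta^k}) = k\,\chi(\Cc_\zeta)$ and the Perron--Frobenius eigenvalue of $\Sf_{\zeta^k}$ is $\theta_1^k$, so \eqref{cond1} is preserved.

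Thus the proof is essentially: (1) verify ergodicity and hyperbolicity of $(\T^2,E_\zeta,m_2)$ from the two-integer-eigenvalue hypothesis; (2) quote the Riesz product / spectral cocycle machinery and the Host--Meiri equidistribution input exactly as in Theorem \ref{thm1}, checking at each use that only ergodicity (plus absence of root-of-unity eigenvalues and absence of eigenvalues on the unit circle) is needed, never full irreducibility of the characteristic polynomial; (3) use the non-constant length assumption to rule out concentration on the proper invariant subtorus; (4) conclude pure singularity from \eqref{cond1}. I expect step (3) to require the most care, as it is the only place where the reducible structure genuinely differs from the irreducible setting.
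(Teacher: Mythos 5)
Your overall strategy --- rerun the proof of Theorem~\ref{thm1} verbatim and replace the equidistribution input by a result of Meiri valid for reducible $2\times 2$ matrices --- is exactly what the paper does. But the justification you give for the crucial step contains a genuine gap. The statement that must be proved is: for Lebesgue-a.e.\ $\om\in\R$, the orbit $\{(\Sf_\zeta^t)^{kj}(\om\vec 1)\}_{j\ge 0}$ equidistributes on $\T^2$. The diagonal is a Haar-null subset of $\T^2$, so \emph{no} mixing property of $E_\zeta$ with respect to $m_2$ (hyperbolicity, exactness, mixing of all orders) can deliver this; and equidistribution of $\{A^n\xi\}$ for ``$\xi$ irrational'' is simply false in general (already for $A=(2)$ on $\T^1$ there are irrational non-normal points). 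The paper flags precisely this obstruction in the proof of Theorem~\ref{thm1}. The correct mechanism is measure rigidity: one feeds into Meiri's Theorem 6.4 the measure $\mu=$ Lebesgue measure on the diagonal, which is invariant, ergodic and of positive entropy for $B=pI$ with $p\ge 2$ coprime to $\det\Sf_\zeta$, and checks Meiri's hypotheses on $A=\Sf_\zeta^t$. Your proposal never sets this up; it asserts applicability of ``Host--Meiri'' on grounds (Haar-mixing, irrationality) that do not bear on orbits of points in a null set, and Host's theorem itself is unavailable here since it requires irreducibility of the characteristic polynomial of every power of $A$.

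Relatedly, you misassign the roles of the hypotheses. The assumption that both integer eigenvalues have modulus $>1$ is there to guarantee $\lam_1,\lam_2\ne 0$ and $\lam_1,\lam_2,\lam_1/\lam_2$ are not roots of unity (integer roots of unity are $\pm1$; primitivity gives $\lam_1>|\lam_2|$), which are hypotheses of Meiri's theorem --- not primarily to make $E_\zeta$ hyperbolic. The non-constant-length assumption enters as Meiri's ``entropy-decreasing direction'' condition: the only entropy-decreasing direction for the diagonal measure is $(1,-1)$, and the requirement is that $(1,1)^t$ not be an eigenvector of $\Sf_\zeta^t$, which is exactly the statement that $\zeta$ is not of constant length (otherwise the diagonal is an $A$-invariant line and its points can never equidistribute on $\T^2$). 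Your alternative reading --- that non-constant length makes the entries of the Perron--Frobenius eigenvector rationally independent --- is false in this setting: the eigenvalues are integers, so all eigenvectors can be taken rational; and in any case the relevant direction for the $\Z$-action is $\vec 1$, not the Perron--Frobenius eigenvector. Once these two points are repaired, the rest of your outline (preservation of \eqref{cond1} under powers, the reduction via \eqref{equidis}, Lemmas~\ref{lem-vspom1} and \ref{lem-singul}) goes through as in the paper.
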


For comparison, we point out the following

\begin{prop}[{\cite[Cor.\,4.7(i)]{BuSo19}}] \label{prop-gap} Suppose that $\Sf_\zeta$ has no eigenvalues that are roots of unity, so that $\chi(\Mc_\zeta)$ is well-defined. Then
$$
\chi(\Cc_\zeta) \le \frac{\log\theta_1}{2}\,.
$$
\end{prop}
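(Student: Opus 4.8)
The plan is to bound the Lyapunov exponent from above by an $L^2$-estimate on the cocycle matrices combined with Jensen's inequality. Since all matrix norms on $M_d(\C)$ are equivalent, $\chi(\Cc_\zeta)$ does not depend on the chosen norm, and I would work with the Frobenius norm $\|\cdot\|_{\rm F}$ throughout. The map $\xi\mapsto \log\|\Cc_\zeta(\xi,n)\|_{\rm F}$ is a subadditive cocycle over the endomorphism \eqref{torend} by submultiplicativity of $\|\cdot\|_{\rm F}$ applied to \eqref{cocycle3}, and $\log\|\Cc_\zeta(\xi,1)\|_{\rm F}=\log\|\Mc_\zeta(\xi)\|_{\rm F}$ is integrable by Lemma~\ref{lem-integrab}. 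Hence, by Kingman's subadditive ergodic theorem (and ergodicity of $E_\zeta$, which holds under the standing hypothesis, so that the a.e.\ limit is constant),
\[
\chi(\Cc_\zeta)=\lim_{n\to\infty}\frac1n\int_{\T^d}\log\|\Cc_\zeta(\xi,n)\|_{\rm F}\,dm_d(\xi)=\inf_{n\ge1}\frac1n\int_{\T^d}\log\|\Cc_\zeta(\xi,n)\|_{\rm F}\,dm_d(\xi),
\]
so it suffices to show this integral is at most $\tfrac12 n\log\theta_1+O(1)$.

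The heart of the argument, and the step I expect to require the most care, is the exact identity
\[
\int_{\T^d}\|\Cc_\zeta(\xi,n)\|_{\rm F}^2\,dm_d(\xi)=\one^{\,t}\Sf_\zeta^{\,n}\one,
\]
the sum of all entries of $\Sf_\zeta^n$. To prove it I would use \eqref{coc2} to write $\Cc_\zeta(\xi,n)=\Cc_{\zeta^n}(\xi)$; then, writing $\zeta^n(b)=u^b_1\cdots u^b_{L_b}$, the $(b,c)$ entry of $\Cc_{\zeta^n}(\xi)$ equals $\sum_{j:\,u^b_j=c}e^{-2\pi i\langle\ell^b_j,\xi\rangle}$, where $\ell^b_j\in\Z^d_{\ge0}$ is the Parikh vector (abelianization) of the prefix $u^b_1\cdots u^b_{j-1}$. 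The crucial observation is that these Parikh vectors are pairwise distinct: for $j<j'$ the difference $\ell^b_{j'}-\ell^b_j$ is the Parikh vector of the nonempty word $u^b_j\cdots u^b_{j'-1}$, hence nonzero. Therefore the characters occurring in each entry are distinct, so $\int_{\T^d}|(\Cc_{\zeta^n}(\xi))_{b,c}|^2\,dm_d=\#\{j:u^b_j=c\}=(\Sf_\zeta^n)_{c,b}$, and summing over $(b,c)\in\A^2$ gives the identity. This orthogonality, yielding an \emph{equality} rather than merely the trivial bound $|(\Cc_{\zeta^n}(\xi))_{b,c}|\le(\Sf_\zeta^n)_{c,b}$, is precisely what produces the factor $\tfrac12$.

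The rest is routine. By primitivity of $\Sf_\zeta$ with Perron--Frobenius eigenvalue $\theta_1$ one has $\one^{\,t}\Sf_\zeta^{\,n}\one\le C\theta_1^{\,n}$ for a constant $C=C(\zeta)$ (from the rank-one asymptotics $\theta_1^{-n}\Sf_\zeta^n\to r\ell^t/(\ell^t r)$ for the positive Perron eigenvectors, or simply $\|\Sf_\zeta^n\|\asymp\theta_1^n$). Then Jensen's inequality (concavity of $\log$) and the identity above give
\[
\int_{\T^d}\log\|\Cc_\zeta(\xi,n)\|_{\rm F}\,dm_d=\tfrac12\int_{\T^d}\log\|\Cc_\zeta(\xi,n)\|_{\rm F}^2\,dm_d\le\tfrac12\log\!\int_{\T^d}\|\Cc_\zeta(\xi,n)\|_{\rm F}^2\,dm_d\le\tfrac12\log\bigl(C\theta_1^{\,n}\bigr),
\]
and dividing by $n$ and letting $n\to\infty$ yields $\chi(\Cc_\zeta)\le\tfrac12\log\theta_1$. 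The main obstacle is the orthogonality computation establishing the sharp $L^2$-identity; the Perron--Frobenius growth bound and the Jensen step are standard.
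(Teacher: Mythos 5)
Your proof is correct and follows essentially the same route as the paper's own argument (Section 4.1, after \cite[Th.\,3.29]{BG2M}): reduce to $\inf_n \frac1n\int\log\|\Cc_{\zeta^n}\|_{\rm F}$ via Kingman, apply Jensen, and use the fact that each entry of $\Cc_{\zeta^n}$ is a $0$--$1$ trigonometric polynomial so that Parseval gives $\int\|\Cc_{\zeta^n}\|_{\rm F}^2\,dm_d=\sum_{b,c}\Sf_{\zeta}^n(c,b)\asymp\theta_1^n$. Your explicit verification that distinct prefixes have distinct Parikh vectors is a welcome justification of the $0$--$1$ coefficient claim, which the paper asserts without proof.
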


Essentially the same result is also proved in a direct and more elementary way in \cite[Th.\,3.29]{BG2M}, and we sketch their proof in Section 4 for completeness.

\medskip

For a practical verification of the condition (\ref{cond1}) one can use the following standard result. 

\begin{lemma} \label{lem-vspom1}
Suppose that $\Sf_\zeta$ has no eigenvalues that are roots of unity, so that $\chi(\Mc_\zeta)$ is well-defined. Then
\be \label{cond2}
\chi(\Mc_\zeta) = \inf_k \frac{1}{k} \int_{\T^d} \log\|\Cc_{\zeta^k}(\xi)\|\,dm_d(\xi).
\ee
\end{lemma}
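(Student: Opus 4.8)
The plan is to recognize the right-hand side of \eqref{cond2} as the Fekete limit of a subadditive sequence of integrals and to identify that limit with the Lyapunov exponent via the integrated form of the Furstenberg--Kesten (equivalently, Kingman) theorem. First, by \eqref{coc2} we have $\Cc_{\zeta^k}(\xi) = \Cc_\zeta(\xi,k)$, so, writing
$$
a_k := \int_{\T^d} \log\|\Cc_\zeta(\xi,k)\|\,dm_d(\xi),
$$
the asserted identity becomes $\chi(\Mc_\zeta) = \inf_k a_k/k$.

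Next I would check that $(a_k)_{k\ge 1}$ is subadditive. Iterating \eqref{cocycle3} gives the cocycle relation $\Cc_\zeta(\xi,k+l) = \Cc_\zeta(E_\zeta^{k}\xi,l)\,\Cc_\zeta(\xi,k)$, and submultiplicativity of the (Frobenius) norm yields $\log\|\Cc_\zeta(\xi,k+l)\| \le \log\|\Cc_\zeta(E_\zeta^k\xi,l)\| + \log\|\Cc_\zeta(\xi,k)\|$; integrating over $\T^d$ and using that $E_\zeta$ preserves $m_d$ gives $a_{k+l}\le a_k + a_l$. Moreover each $a_k$ is finite: by property (i) applied to $\zeta^k$, the entries of $\Cc_{\zeta^k}(\xi)$ are trigonometric polynomials with $0/1$ coefficients bounded in modulus by the entries of $\Sf^t_{\zeta^k}$, so $\log\|\Cc_\zeta(\xi,k)\|$ is bounded above; while Lemma~\ref{lem-integrab} applied to $\zeta^k$ (whose substitution matrix $\Sf_\zeta^k$ is again invertible, with $\|\Cc_{\zeta^k}(\xi)\|_{\rm F}^2$ a non-trivial integer trigonometric polynomial) gives $a_k\ge 0$, so $\log\|\Cc_\zeta(\xi,k)\|$ is integrable from below as well. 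By Fekete's subadditive lemma, $\lim_k a_k/k = \inf_k a_k/k$, a finite nonnegative number.

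Finally, since $\Sf_\zeta$ has no eigenvalue that is a root of unity, $(\T^d,E_\zeta,m_d)$ is ergodic, and the Furstenberg--Kesten theorem \cite{FK} applies to the cocycle \eqref{cocycle3}: in addition to the almost-everywhere limit \eqref{eq-FK}, it also gives $\chi(\Mc_\zeta)=\lim_k \tfrac1k\int_{\T^d}\log\|\Cc_\zeta(\xi,k)\|\,dm_d(\xi)=\lim_k a_k/k$ (alternatively, this is the $L^1$/integrated form of Kingman's subadditive ergodic theorem applied to the subadditive sequence $\xi\mapsto \log\|\Cc_\zeta(\xi,k)\|$, which is legitimate in view of the integrability just established). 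Combining with the previous step, $\chi(\Mc_\zeta) = \inf_k a_k/k = \inf_k \tfrac1k\int_{\T^d}\log\|\Cc_{\zeta^k}(\xi)\|\,dm_d(\xi)$, which is \eqref{cond2}. The only point requiring care --- the ``main obstacle'' such as it is --- is the passage from the pointwise convergence \eqref{eq-FK} to convergence of the integrals $a_k/k$ to the same constant; this is precisely what the integrated form of Furstenberg--Kesten/Kingman supplies, and it rests on the integrability from Lemma~\ref{lem-integrab} together with the subadditivity above. Everything else is the routine Fekete argument and bookkeeping with the cocycle relation.
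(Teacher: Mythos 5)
Your proof is correct and rests on the same ingredients as the paper's: the cocycle relation makes $\log\|\Cc_{\zeta^k}(\xi)\|$ a subadditive sequence of integrable functions over the ergodic endomorphism $E_\zeta$, and Kingman's subadditive ergodic theorem identifies the a.e.\ limit $\chi(\Mc_\zeta)$ with $\inf_k \frac1k\int_{\T^d}\log\|\Cc_{\zeta^k}(\xi)\|\,dm_d(\xi)$. Your extra detour through Fekete's lemma and the integrated ($L^1$) form of Furstenberg--Kesten is just a more explicit unpacking of the same argument, with the useful observation (via Lemma~\ref{lem-integrab} applied to $\zeta^k$) that the infimum is nonnegative and hence finite.
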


{
\begin{proof}
Note that $k\mapsto \log\|\Cc_{\zeta^k}(\xi)\|$ is a subadditive sequence of integrable functions. The endomorphism $E_\zeta:\xi \mapsto \Sf_\zeta^t \xi$ (mod $\Z^d$) is ergodic, hence by
Kingman's subadditive ergodic theorem \cite{Kingman}, see also \cite{Steele},
the sequence $\frac{1}{n} \log\|\Cc_{\zeta^n}(\xi)\|$ converges a.e.\ to $\inf_k \frac{1}{k} \int_{\T^d} \log\|\Cc_{\zeta^k}(\xi)\|\,dm_d(\xi)$, and the claim follows.
\end{proof}

\begin{corollary} Suppose that $E_\zeta$ is ergodic. Then $\chi(\Mc_\zeta)\ge 0$.
\end{corollary}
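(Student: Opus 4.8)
The plan is to combine Lemma~\ref{lem-integrab}, applied to each power $\zeta^k$ of $\zeta$, with the variational formula of Lemma~\ref{lem-vspom1}.

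First I would note that the assumption that $E_\zeta$ is ergodic is equivalent to $\Sf_\zeta$ having no eigenvalue that is a root of unity (the standard ergodicity criterion for toral endomorphisms); hence $\chi(\Mc_\zeta)$ is well-defined by the Furstenberg--Kesten theorem and Lemma~\ref{lem-vspom1} gives
\[
\chi(\Mc_\zeta)\;=\;\inf_{k\ge 1}\frac{1}{k}\int_{\T^d}\log\|\Cc_{\zeta^k}(\xi)\|\,dm_d(\xi).
\]
Since $\chi(\Mc_\zeta)$ is independent of the choice of matrix norm, I would work with the Frobenius norm throughout.

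Next, fix $k\ge 1$. By \eqref{coc2} we have $\Cc_{\zeta^k}(\xi)=\Mc_{\zeta^k}(\xi)$, and $\zeta^k$ is again a substitution on $\Ak$ whose substitution matrix $\Sf_\zeta^k$ is invertible. By property~(i) applied to $\zeta^k$, the entries of $\Mc_{\zeta^k}$ are trigonometric polynomials with coefficients $0$ and $1$, while $\Mc_{\zeta^k}(0)=(\Sf_\zeta^t)^k\neq 0$; thus $\|\Mc_{\zeta^k}(\xi)\|_{\rm F}^2$ is a non-trivial trigonometric polynomial in $d$ variables with integer coefficients, and the Mahler-measure argument from the proof of Lemma~\ref{lem-integrab} carries over verbatim to yield $\int_{\T^d}\log\|\Mc_{\zeta^k}(\xi)\|_{\rm F}\,dm_d(\xi)\ge 0$. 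Equivalently, every term in the infimum above is $\ge 0$, and therefore $\chi(\Mc_\zeta)\ge 0$.

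I do not expect a substantial obstacle here: the argument is essentially a bookkeeping exercise, the only points to check being the identity $\Cc_{\zeta^k}=\Mc_{\zeta^k}$, which is immediate from \eqref{coc2}, and the fact that all hypotheses of Lemma~\ref{lem-integrab} pass to $\zeta^k$, which holds because $\Sf_\zeta^k$ is a nonzero invertible integer matrix. If one wishes to avoid invoking the toral ergodicity criterion, one may instead observe that the proof of Lemma~\ref{lem-vspom1} uses only ergodicity of $E_\zeta$, which is exactly our hypothesis.
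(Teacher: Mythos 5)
Your proof is correct and follows essentially the same route as the paper: the corollary is obtained by combining the variational formula of Lemma~\ref{lem-vspom1} with the nonnegativity \eqref{eq-integ} of Lemma~\ref{lem-integrab} applied to each power $\zeta^k$. The extra checks you record (that $\Cc_{\zeta^k}=\Mc_{\zeta^k}$ by \eqref{coc2} and that the Mahler-measure argument applies to $\zeta^k$) are exactly the implicit steps the paper leaves to the reader.
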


\noindent This is immediate from Lemma~\ref{lem-vspom1} and \eqref{eq-integ}. 

}

\subsection{Discussion} \label{sec-disc} Condition (\ref{cond1}) appeared in \cite[Cor. 4.7(ii),(iii)]{BuSo19}, which asserts that it implies pure singular spectrum for the suspension (special) flow  over the uniquely ergodic substitution
dynamical system $(X_\zeta,T,\mu)$, for almost every piecewise-constant roof function. 
In fact, by \cite[Cor. 4.5(ii)]{BuSo19}, applied to a single substitution, if for a given $\vec s\in \R^d_+$, for Lebesgue-a.e.\ $\om\in \R$,
\be \label{cond3}
\chi^+_\zeta(\om\vec{s}):=\limsup_{n\to \infty} \frac{1}{n} \log\|\Cc_\zeta(\om\vec s, n)\|< \frac{\log\theta_1}{2},
\ee
then the substitution $\R$-action (suspension flow over $(X_\zeta,T,\mu)$), corresponding to the ``roof function'' determined by $\vec s$, has pure singular spectrum.

Related results for the diffraction spectrum of self-similar substitution $\R$-action were recently obtained by Baake et al.\ \cite{BFGR,BGM,BG2M}. More precisely, denote by $\vec{e}_\zeta$ the (normalized) Perron-Frobenius eigenvector of the transpose substitution matrix:
$$
\Sf_\zeta^t \vec{e}_\zeta = \theta_1\vec{e}_\zeta.
$$ 
The self-similar substitution $\R$-action is the suspension flow over $(X_\zeta,T)$ with the ``roof'' given by $\vec{s}= \vec{e}_\zeta$. Alternatively, it can be viewed as a substitution tiling dynamical system on the line, see \cite{BR,SolTil}, where the tiles are line segments of lengths given by the components of $\vec{e}_\zeta$. The {\em diffraction spectrum} has been studied for such tilings, or equivalently, for the Delone sets associated with them. It is known since the work of Dworkin \cite{Dworkin} that the diffraction spectrum is essentially a ``part'' of the dynamical spectrum of the corresponding dynamical system.

In \cite[Th.\ 3.28]{BG2M} it is proved (in different, but equivalent, terms) that if there exists $\eps>0$ such that for Lebesgue-a.e.\ $\om\in \R$,
\be \label{cond4}
\chi^+_\zeta(\om\vec{e}_\zeta) < \frac{\log\theta_1}{2}-\eps,
\ee
then the diffraction spectrum of the self-similar $\R$-action is purely singular. 
The first such non-Pisot, non-constant length example  $0\mapsto 0111,\ 1\mapsto 0$ was analyzed by Baake, Frank, Grimm, and Robinson \cite{BFGR}, and then in \cite{BGM} this was extended to the family of substitutions $0\mapsto 01^\ell,\ 1\mapsto 0$, with $\ell\ge 3$. It is shown there that 
(\ref{cond4}) follows from (\ref{cond2}), and the latter was verified for these examples by a combination of rigorous estimates and numerical computations. 
This approach was further developed and extended to higher-dimensional substitution tiling systems by Baake, G\"ahler, Grimm, and Ma\~nibo \cite{BG2M}.

The main novelty of our results is that they give, for the first time, a proof of singularity of the dynamical spectrum for the $\Z$-actions associated with a class of non-Pisot non-constant length substitutions.

%%%%%%%%%%%%%%%%%%%%%%%%%%%%%%%%%%%%%%%%%%%

\section{Proofs of the main results}

\begin{lemma} \label{lem-singul}
Let $\vec 1 = (1,\ldots,1)^t$. If for Lebesgue-a.e.\ $\om\in \R$,
\be \label{cond5}
\chi^+_\zeta(\om\vec 1)=\limsup_{n\to \infty} \frac{1}{n} \log\|\Cc_\zeta(\om\ve 1, n)\|< \frac{\log\theta_1}{2},
\ee
then the substitution $\Z$-action $(X_\zeta,T,\mu)$ has pure singular spectrum.
\end{lemma}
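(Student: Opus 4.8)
The plan is to reduce the statement to the case of cylindrical functions and then to combine the generalized matrix Riesz product representation of their spectral measures, obtained in \cite{BuSo14}, with the pointwise cocycle bound \eqref{cond5}. This is essentially the argument used in \cite{BuSo19} to prove singularity for substitution $\R$-actions, run here along the frequency vector $\vec 1$ (the one relevant to the discrete-time action) in place of a Perron--Frobenius or generic vector $\vec s$.

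For the reduction, I would use that the absolutely continuous part of the spectrum of $T$ corresponds to a closed $T$-invariant subspace $\Hk_{\rm ac}\subset L^2(X_\zeta,\mu)$, and that for the Wiener decomposition $f=f_{\rm ac}+f_{\rm sing}$ of any $f\in L^2(X_\zeta,\mu)$ one has $\sigma_{f_{\rm ac}}\le\sigma_f$ together with $\sigma_{f_{\rm ac}}\ll m$, where $m$ is the Haar measure on $\T$. Since cylindrical functions are dense in $L^2(X_\zeta,\mu)$, it suffices to prove that $\sigma_f\perp m$ for every cylindrical $f$: this forces $\sigma_{f_{\rm ac}}=0$, hence $f_{\rm ac}=0$ for all such $f$, and therefore $\Hk_{\rm ac}=\{0\}$. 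Passing, if necessary, to a higher-block presentation (equivalently, to a suitable power $\zeta^k$, which changes neither the system nor its spectral type), I may assume that $f$ depends on the zeroth coordinate only.

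The analytic core is the following. By \cite{BuSo14}, the spectral measure $\sigma_f$ of such an $f$ is a weak-$*$ limit, possibly along a subsequence, of measures $d\sigma_f^{(n)}(\om)=g_n(\om)\,dm(\om)$, where $g_n$ is a nonnegative quadratic expression in the entries of $\Cc_\zeta(\om\vec 1,n)=\Cc_{\zeta^n}(\om\vec 1)$, normalized by $\theta_1^{-n}$; in particular there is a constant $C_f$ with $g_n(\om)\le C_f\,\theta_1^{-n}\,\|\Cc_\zeta(\om\vec 1,n)\|^2$ for all $n$ and $\om$. Since the entries of $\Cc_{\zeta^n}(\om\vec 1)$ are trigonometric polynomials of degree $O(\theta_1^n)$, the partial products $g_n$ vary only by a bounded factor over intervals of length $\theta_1^{-n}$, and the Riesz product structure yields, for $m$-a.e.\ $\om$,
\[
\sigma_f\bigl(B(\om,\theta_1^{-n})\bigr)\ \le\ C\,\theta_1^{-n}\sup_{|\om'-\om|\le C\theta_1^{-n}} g_n(\om')\ \le\ C'_f\,\theta_1^{-2n}\,\|\Cc_\zeta(\om\vec 1,n)\|^2 .
\]
By \eqref{cond5}, for $m$-a.e.\ $\om$ there is $\delta=\delta(\om)>0$ with $\|\Cc_\zeta(\om\vec 1,n)\|\le\theta_1^{(1/2-\delta)n}$ for all large $n$, so that the right-hand side is $\le C'_f\,\theta_1^{-(1+2\delta)n}$; interpolating over $r\in[\theta_1^{-(n+1)},\theta_1^{-n}]$ this gives, for $m$-a.e.\ $\om$,
\[
\liminf_{r\to0}\frac{\log\sigma_f\bigl(B(\om,r)\bigr)}{\log r}\ \ge\ 2-\frac{2\chi^+_\zeta(\om\vec 1)}{\log\theta_1}\ >\ 1 ,
\]
and in particular $\sigma_f(B(\om,r))=o(r)$ as $r\to0$. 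Consequently $\sigma_f$ has no absolutely continuous component: an absolutely continuous part $h\,dm$ with $h\not\equiv0$ would force $\sigma_f(B(\om,r))\gtrsim r$ at every Lebesgue density point of the positive-measure set $\{h>0\}$. Hence $\sigma_f\perp m$, and, by the reduction, the $\Z$-action has pure singular spectrum.

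The hard part will be justifying the inequality for $\sigma_f(B(\om,\theta_1^{-n}))$: one must argue rigorously that the $n$-th Riesz partial product controls $\sigma_f$ on balls of radius $\theta_1^{-n}$, which entails matching the degrees of the trigonometric polynomials occurring in $\Cc_{\zeta^n}(\om\vec 1)$ against the self-similar scale $\theta_1^{-n}$, and handling the $\om$ at which $\Cc_{\zeta^n}(\om\vec 1)$ is anomalously small within a scale-$\theta_1^{-n}$ window. This is exactly the estimate carried out in \cite{BuSo14,BuSo19} for a frequency vector $\vec s$, and I would reproduce it with $\vec s$ replaced by $\vec 1$.
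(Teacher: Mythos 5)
Your overall strategy coincides with the paper's second proof of this lemma: derive a lower bound $\und{d}(\sig_f,\om)\ge 2-2\chi^+_\zeta(\om\vec 1)/\log\theta_1$ on the local dimension of the spectral measure from the matrix Riesz product representation of \cite{BuSo14}, deduce from \eqref{cond5} that $\und{d}(\sig_f,\om)>1$ for Lebesgue-a.e.\ $\om$, and conclude that the absolutely continuous part vanishes. The analytic core you defer to \cite{BuSo14} is indeed \cite[Prop.\,7.2]{BuSo14}, so that part is fine. (The paper also records a shorter first proof, via the equivalence of the spectral type of the $\Z$-action with that of the suspension flow with constant roof $\vec 1$, combined with \cite[Cor.\,4.5(iii)]{BuSo19}; you do not take that route.)

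The genuine gap is in your reduction to functions of the zeroth coordinate. Passing to a power $\zeta^k$ does not change which coordinates $f$ depends on, and a higher-block recoding replaces $\zeta$ by a different substitution on a larger alphabet, whose spectral cocycle is a different matrix-valued function on a higher-dimensional torus; hypothesis \eqref{cond5} says nothing about the top exponent of that cocycle, so the dimension estimate does not transfer. The paper handles this point differently: (a) it uses that the maximal spectral type of $(X_\zeta,T,\mu)$ is $\sum_{k\ge 0,\,a\in\Ak}2^{-k}\sig_{\One_{\zeta^k[a]}}$ (from \cite[Section 2.4]{BuSo14}), so it suffices to treat the tower functions $\One_{\zeta^k[a]}$ rather than all cylinder functions; (b) for these the Riesz product starts at level $k$, which yields $\und{d}(\sig_{\One_{\zeta^k[a]}},\om)\ge 2-2\max\{\chi^+_\zeta(E^k_\zeta(\om\vec 1)),0\}/\log\theta_1$; and (c) it proves that $\chi^+_\zeta(E^k_\zeta(\om\vec 1))=\chi^+_\zeta(\om\vec 1)$ for all $\om$ outside a countable set, because $\om\mapsto\det\Mc_\zeta(E^\ell_\zeta(\om\vec 1))$ is a nontrivial trigonometric polynomial in one variable (its value at $\om=0$ is $\det\Sf_\zeta^t\ne 0$) and hence has only countably many zeros. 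Steps (b) and (c) are exactly what your one-line recoding remark conceals, and they need to be supplied for the proof to close.
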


This is almost immediate from \cite[Cor. 4.5(iii)]{BuSo19} in the special case of a single substitution, as we explain in the next section. We also provide there an alternative proof, essentially equivalent, but simpler ``from scratch'' and more direct, since it relies on \cite{BuSo14} rather than \cite{BuSo19}.

\begin{proof}[Proof of Theorem~\ref{thm1}.] 
The problem in applying Lemma~\ref{lem-singul} is that
the diagonal is a set of Haar measure zero on the torus, and we cannot use (\ref{cond1}) directly, as the Lyapunov exponent exists, a priori, only $m_d$-a.e. We can, however, use a powerful equidistribution theorem of B. Host \cite{Host}  to get around this.

Observe that for any $\vec s\in \R^d_+$  and $k\ge 1$,
\begin{eqnarray}
\chi^+_\zeta(\om\vec{s}) & = & \limsup_{n\to \infty} \frac{1}{nk} \log\|\Cc_{\zeta^k}(\om\vec s, n)\| \nonumber \\
& \le & \limsup_{n\to \infty} \frac{1}{nk}  \sum_{j=0}^{n-1} \log\|\Cc_{\zeta^k}(E_\zeta^{kj} (\om\vec s))\| \nonumber \\
& \le & \lim_{\eps\to 0} \limsup_{n\to \infty} \frac{1}{nk}  \sum_{j=0}^{n-1} \log\bigl(\eps+\|\Cc_{\zeta^k}(E_\zeta^{kj} (\om\vec s))\|\bigr). \label{Lyap}
\end{eqnarray}

We need a theorem of B. Host, namely, a combination of Theorem 2 and Proposition 1 from \cite{Host}, which we restate below in the form convenient for us.

\begin{theorem}[{Host}] \label{th-host}
Let $A$ be a $(d\times d)$-matrix with integer entries, such that for every $r>0$, the characteristic polynomial of $A^r$ is irreducible over $\Q$.
Let $B$ be a $(d\times d)$-matrix with integer entries, such that all eigenvalues of $B$ have modulus $>1$, and $q = |\det(B)|$ is relatively prime with $\det(A)$.
If the probability measure $\mu$ on $\T^d$ is invariant, ergodic and of positive entropy for the endomorphism corresponding to $B$, then the sequence $\{A^n \bt\}_{n\ge 0}$ is equidistributed on $\T^d$ for $\mu$-a.e.\ $\bt\in \T^d$.
\end{theorem}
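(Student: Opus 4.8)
The statement as quoted is not a new result; it is a restatement of Host's theorem \cite{Host} (a combination of his Theorem~2 and Proposition~1), so a ``proof'' here amounts to matching our hypotheses with his. The plan is: (1) recall the form of Host's one-dimensional result and of its higher-dimensional extension; (2) check that the three conditions imposed on the pair $(A,B)$ imply all of Host's running assumptions; (3) invoke his theorem verbatim.

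First I would recall the one-dimensional prototype. Host proved that if $\nu$ is an ergodic probability measure on $\T$ invariant under $x\mapsto qx$ with $h_\nu>0$, and $p$ is an integer in suitable multiplicative-independence position with respect to $q$ (for instance $\gcd(p,q)=1$), then $(p^nx)_{n\ge 0}$ is equidistributed modulo $1$ for $\nu$-a.e.\ $x$. In the quoted statement the matrix $B$ plays the role of ``$\times q$'': it is the toral endomorphism $E_B\colon x\mapsto Bx\ (\mathrm{mod}\ \Z^d)$ for which $\mu$ is invariant, ergodic and of positive entropy, while $A$ plays the role of ``$\times p$'', the map along which equidistribution is tested. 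Host's Proposition~1 is precisely the passage from $\T$ to $\T^d$ and from multiplication by integers to integer matrices: it reduces the $d$-dimensional equidistribution of $(A^n\bt)$ to the one-dimensional Theorem~2 applied in an appropriate coordinate, \emph{provided} (a) $A$ is totally irreducible, i.e.\ the characteristic polynomial of $A^r$ is irreducible over $\Q$ for every $r\ge 1$ --- this rules out rational $A$-invariant subtori in any power, where equidistribution could fail for trivial reasons, and in particular forces $\det A\ne 0$; (b) the eigenvalues of $B$ all have modulus $>1$, so that $E_B$ is an expanding endomorphism, for which $h_\mu(E_B)\le\log|\det B|$ and Host's entropy machinery applies; and (c) $q=|\det B|$ is relatively prime to $\det A$, which is the multiplicative-independence substitute ensuring that $A$-push-forwards genuinely mix the $B$-fibres.

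With these identifications the statement is immediate: (a), (b), (c) are exactly the hypotheses of \cite[Theorem~2 and Proposition~1]{Host}, whose conclusion is equidistribution of $\{A^n\bt\}_{n\ge 0}$ for $\mu$-a.e.\ $\bt$. The only genuine work in a careful write-up is the bookkeeping of step~(2): Host's assumptions are phrased on the dual group $\Z^d$ and in terms of prime factorizations and conditional entropies along the $A$-direction, so one must verify that our compact conditions translate faithfully --- that coprimality of $\det A$ and $|\det B|$ descends to the relevant subgroups, and that irreducibility of the characteristic polynomial of every $A^r$ excludes all of Host's degenerate cases. If, instead, one wished to reproduce Host's proof rather than cite it, the heart of the matter --- and the step I expect to be the main obstacle --- is the Fourier-analytic/entropy argument: one encodes failure of equidistribution of $(A^n\bt)$ as nonvanishing of suitable Weyl sums, uses positivity of $h_\mu(E_B)$ to show that the conditional measures of $\mu$ along the unstable directions of $B$ are non-atomic and sufficiently spread out, and then derives a contradiction from the coprimality of $\det A$ and $|\det B|$, which forces $A$-images to equidistribute along those fibres. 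That argument is exactly Host's theorem, so in our context we simply apply it.
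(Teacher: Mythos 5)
Your proposal takes the same route as the paper: the statement is quoted verbatim as ``a combination of Theorem 2 and Proposition 1 from \cite{Host}'', and no proof is given there beyond this citation, which is exactly what you do. Your closing sketch of Host's entropy/Weyl-sum argument is extra (and not verified in detail), but it is not needed and the paper does not attempt it either.
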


{ In Theorem~\ref{thm1} we apply Host's Theorem with $A = (\Sf^t_\zeta)^k$, which defines the endomorphism $E^k_\zeta$. We assumed that the characteristic polynomial of $\Sf_\zeta$ is irreducible, and in our case this implies irreducibility for all powers; see the elementary Lemma~\ref{lem-irred} in the next section. Further, $\mu$ is the Lebesgue measure on the diagonal $(x,\ldots,x)$; it is invariant and ergodic, with positive entropy, for $B=pI$ ($I$ is the identity matrix), with $p\ge 2$, which is just the times-$p$ map on the diagonal.  It is trivial to satisfy the condition $GCD(\det A,\det B)=1$ since $p\ge 2$ can be taken arbitrary. The conclusion is that for Lebesgue-a.e.\ $\om$, the
 sequence $\{E_\zeta^{kj}(\om \one)  \}_{j\ge 1}$ is uniformly distributed on the torus $\T^d$. Since $\xi\mapsto \log(\eps + \|\Cc_{\zeta^k}(\xi)\|)$ is a continuous function on $\T^d$ for $\eps>0$, this implies that for Lebesgue-a.e.\ $\om\in \R$, for any fixed $k\in \N$,
\be \label{equidis}
 \lim_{n\to \infty} { \frac{1}{n}} \sum_{j=0}^{n-1} \log\bigl(\eps+\|\Cc_{\zeta^k}(E_\zeta^{kj} (\om \one))\|\bigr) = \int_{\T^d} \log\bigl(\eps + \|\Cc_{\zeta^k}(\xi)\|\bigr)\,dm_d(\xi).
 \ee}
In view of (\ref{Lyap}),   we then obtain for Lebesgue-a.e.\ $\om$ and any fixed $k\ge 1$:
$$
\chi^+_\zeta(\om\vec{1}) \le \lim_{\eps\to 0} \frac{1}{k} \int_{\T^d} \log\bigl(\eps + \|\Cc_{\zeta^k}(\xi)\|\bigr)\,dm_d(\xi)=\frac{1}{k} \int_{\T^d} \log\|\Cc_{\zeta^k}(\xi)\|\,dm_d(\xi).
$$
The passage to the limit $\eps\to 0$ under the integral sign is justified by splitting the integral into the sum: (A) over the set where $\|\Mc_{\zeta^k}(\xi)\|\ge \half$ and (B) over the set where $\|\Mc_{\zeta^k}(\xi)\|\in (0,\half)$. In (A) the functions are uniformly bounded, and in (B) we can use the Dominated Convergence Theorem,  since $\xi \mapsto \log\|\Cc_{\zeta^k}(\xi)\|$ is integrable 
(cf.\ Lemma~\ref{lem-integrab}).
The proof is concluded by an application of Lemmas~\ref{lem-vspom1} and \ref{lem-singul}.
\end{proof}

\begin{proof}[Proof of Theorem~\ref{thm2}] The proof is exactly the same as that of Theorem~\ref{thm1}, except that we use an equidistibution result of D. Meiri, which we quote in a special case relevant for us.

\begin{theorem}[{D. Meiri \cite[Thm. 6.4]{Meiri}}]
Let $ B= \scriptscriptstyle{\left(\begin{array}{cc} p & 0 \\ 0 & p\end{array}\right)}$, for $p\ge 2$ an integer. Suppose that $\mu$ is a Borel $B$-invariant measure on $\T^2$, ergodic and with positive entropy. Let $A$ be an integer $2\times 2$ matrix. Assume that $A$ has two eigenvalues $\lam_1,\lam_2\ne 0$ such that $\det (A)$ and $p$ are relatively prime, and $\lam_1,\lam_2, \lam_1/\lam_2$ are not roots of unity. Furthermore, assume that there exists an entropy-decreasing direction $(a,b)$ for which $\scriptscriptstyle{\left(\begin{array}{c} b \\ -a \end{array}\right)}$ is not an eigenvector of $A$. Then for $\mu$-a.e.\ $(x,y)\in \T^2$, the sequence $\Bigl\{A^n \scriptscriptstyle{\left(\begin{array}{c} x \\ y \end{array}\right)}\Bigr\}$ is uniformly distributed on $\T^2$.
\end{theorem}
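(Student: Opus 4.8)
The plan is to establish equidistribution by Weyl's criterion, controlled through a second-moment estimate whose decay is forced by the positive entropy of $\mu$; this is the two-dimensional form of the method of Host \cite{Host}. Throughout, write $e(s)=\exp(2\pi i s)$ and, for $\ell\in\Z^2$, let $\hat\mu(\ell)=\int_{\T^2} e(-\langle\ell,\by\rangle)\,d\mu(\by)$ be the Fourier coefficients of $\mu$, where $\by=(x,y)^\top$. By Weyl's criterion, the orbit $\{A^n\by\}_{n\ge 0}$ is uniformly distributed for $\mu$-a.e.\ $\by$ precisely when, for each fixed $\ell\in\Z^2\sms\{0\}$ (a countable condition),
\[
\frac1N\sum_{n=0}^{N-1} e\bigl(\langle\ell,A^n\by\rangle\bigr)=\frac1N\sum_{n=0}^{N-1} e\bigl(\langle\ell_n,\by\rangle\bigr)\longrightarrow 0\quad(\mu\text{-a.e.}),\qquad \ell_n:=(A^\top)^n\ell.
\]
So I would fix one nonzero $\ell$ and aim to show that the averages of the characters $\by\mapsto e(\langle\ell_n,\by\rangle)$ tend to zero almost everywhere.

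The second step is to pass through $L^2(\mu)$. Expanding the square gives
\[
\Bigl\|\tfrac1N\sum_{n=0}^{N-1} e(\langle\ell_n,\cdot\rangle)\Bigr\|_{L^2(\mu)}^2=\frac1{N^2}\sum_{m,n=0}^{N-1}\hat\mu(\ell_m-\ell_n),\qquad \ell_m-\ell_n=(A^\top)^n\bigl((A^\top)^{m-n}-I\bigr)\ell,
\]
so the problem reduces to showing that these correlation sums vanish as $N\to\infty$. Almost-everywhere convergence would then be recovered from the $L^2$ decay by the classical device of proving the bound along a sparse subsequence $N_j$ (with $N_{j+1}/N_j\to 1$) together with a maximal/monotonicity argument to fill the gaps. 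The analytic content is thus concentrated in the behaviour of $\hat\mu$ along the escaping sequence of frequency differences $\ell_m-\ell_n$.

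The crucial and most delicate ingredient is the entropy input. The $B$-invariance of $\mu$ gives $\hat\mu(p\ell')=\hat\mu(\ell')$ for all $\ell'$, so the Fourier coefficients are constant along multiplication-by-$p$ orbits and cannot decay along them; any decay must be extracted transversally to this $p$-adic structure, and this is exactly the information supplied by positive entropy. Following Host's conditional-measure lemma, I would show that positive entropy forces $\hat\mu(\ell')\to 0$ as $\ell'$ escapes to infinity within the directions of full entropy, and then verify that the hypotheses drive the differences $\ell_m-\ell_n$ into precisely those directions. Here the assumptions enter as follows: $\gcd(\det A,p)=1$ makes $A$ act invertibly on the $p$-adic fibres, so that $A^\top$ does not collapse frequencies into the $p$-power directions along which $\hat\mu$ is constant; the conditions that $\lam_1,\lam_2,\lam_1/\lam_2$ are not roots of unity exclude the resonances $(A^\top)^r=I$ and $(A^\top)^r=\alpha I$ for a scalar $\alpha$, preventing a frequency from being trapped in a bounded $p$-power orbit; and the hypothesis that the entropy-decreasing direction $(a,b)$ has its normal $(b,-a)^\top$ not an eigenvector of $A$ ensures that iteration does not confine the escaping frequencies to the single low-entropy direction, so they are genuinely pushed into the region where $\hat\mu$ decays. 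Granting this, the correlation sums tend to zero, the $L^2(\mu)$ norms vanish, and equidistribution follows.

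I expect the main obstacle to be exactly this entropy-to-Fourier-decay transfer. One cannot appeal to any naive decay of $\hat\mu$, since there is none along the $p$-power orbits; instead the argument must quantify how positive entropy compels $\hat\mu$ to be small precisely on the frequencies into which $A^\top$ expands, and must simultaneously confirm that the algebraic hypotheses on the eigenvalues of $A$ and on the entropy-decreasing direction keep every relevant difference $\ell_m-\ell_n$ inside that expanding, high-entropy region. Making these two requirements compatible and precise is where the full strength of the Host--Meiri conditional-measure machinery is needed.
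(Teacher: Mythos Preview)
The paper does not prove this theorem at all: it is quoted verbatim from Meiri \cite[Thm.\ 6.4]{Meiri} and used as a black box in the proof of Theorem~\ref{thm2}, exactly as Host's theorem is used in the proof of Theorem~\ref{thm1}. So there is no ``paper's own proof'' to compare your attempt against.

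As for your sketch itself: the overall architecture --- Weyl's criterion, expansion of the $L^2(\mu)$ square, reduction to decay of $\hat\mu$ along the frequency differences $(A^\top)^n\bigl((A^\top)^{m-n}-I\bigr)\ell$, and an entropy-based Fourier-decay lemma in the style of Host --- is indeed the skeleton of Meiri's argument. But you correctly identify, and then do not resolve, the actual difficulty. The phrase ``Granting this'' covers essentially all of the content: one must (a) give a precise formulation of the entropy-decreasing directions and the associated conditional-measure decay statement in the $\T^2$ setting, (b) prove that positive entropy for the times-$p$ map forces $\hat\mu$ to vanish along frequencies escaping transversally to the $p$-adic directions, and (c) carry out the combinatorial/algebraic verification that, under the stated hypotheses on $\lam_1,\lam_2,\lam_1/\lam_2$ and on the normal to the entropy-decreasing direction, the relevant differences $\ell_m-\ell_n$ do escape in the required sense for every nonzero $\ell$. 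Each of these is a substantial piece of Meiri's paper; your proposal names them but does not carry them out, so as written it is an outline rather than a proof.
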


In our application, $\mu$ is again the Lebesgue measure on the diagonal $\{(x,x)\}$; it is $B$-invariant and ergodic of positive entropy. 
The projection of $\mu$ onto a straight line in any direction is invariant under the times-$p$ map and has entropy $\log p$, unless the line is orthogonal to the diagonal. Thus
there is only one entropy-decreasing direction $(1,-1)$, and the condition is for $(1,1)^t$ not  to be an eigenvector for $A$. In our application, $A={\sf S}_\zeta^t$, the transpose substitution matrix, so the assumption that $\zeta$ is a non-constant length substitution is equivalent to $(1,1)^t$ not being an eigenvector.  It is again easy to satisfy the condition $GCD(p,\det(A))=1$ since $p\ge 2$ can be taken arbitrary. 
We assumed that both eigenvalues of $\Sf_\zeta$ are integers, since otherwise the case is already covered by Theorem~\ref{thm1}. Having the second eigenvalue greater than one in modulus guarantees that all conditions on the eigenvalues are satisfied.
The conclusion is that for Lebesgue-a.e.\ $x$, 
the sequence $\Bigl\{A^n \scriptscriptstyle{\left(\begin{array}{c} x \\ x \end{array}\right)}\Bigr\}$ mod $\Z^2$ is uniformly distributed on $\T^2$, and the proof is finished as in Theorem~\ref{thm1}, starting from (\ref{equidis}).
\end{proof}

%%%%%%%%%%%%%%

\section{ Proofs of auxiliary results}

\subsection{Alternative proof of Proposition~\ref{prop-gap}}
This proof  follows \cite[Th.\,3.29]{BG2M} and is included for completeness.
We obtain from Lemma~\ref{lem-vspom1} and Jensen's inequality for $k\in \N$:
\be
\chi(\Mc_\zeta) \le \frac{1}{k}\int_{\T^d} \log\|\Mc_{\zeta^k}(\xi)\|\,dm_d(\xi) \le \frac{1}{2k} \log\int_{\T^d} \|\Mc_{\zeta^k}(\xi)\|^2\,dm_d(\xi). 
\ee
Recall that the entries of $(\Mc_{\zeta^k}(\xi))_{b,c}$ are trigonometric polynomials in $d$ variables, and we can use
 the Frobenius norm of a matrix, that is, ${\|(A_{ij})_{i,j}\|}_{\rm F} = (\sum_{i,j} |A_{ij}|^2)^{1/2}$.  By Parseval's formula,
$$
\int_{\T^d} |(\Cc_{\zeta^k}(\xi))_{b,c}|^2\,d\xi = \sum_{\bn \in \Z^d} |(\what{\Cc_{\zeta^k}})_{b,c}(\bn)|^2 = \sum_{\bn \in \Z^d} |(\what{\Cc_{\zeta^k}})_{b,c}(\bn)|.
$$
In the last equality we used the fact that the trigonometric polynomials $(\Mc_{\zeta^k}(\xi))_{b,c}$ have only coefficients $0$ and $1$. Thus we obtain
\be \label{ugu}
\chi(\Cc_\zeta) \le \frac{1}{2k} \log\sum_{b,c} \sum_{\bn \in \Z^d}  |(\what{\Cc_{\zeta^k}})_{b,c}(\bn)| = \frac{1}{2k} \log\sum_{b,c} |\Sf_{\zeta^k}^t(b,c)| = \frac{1}{2k} \log\sum_{b,c} |\Sf_{\zeta^k}(c,b)|,
\ee
where we used that the number of non-zero coefficients of $(\Cc_{\zeta^k}(\xi))_{b,c}$ is exactly the number of letters $c$ in $\zeta^k(b)$, which is the $(c,b)$-entry of the substitution matrix $\Sf_{\zeta^k}$.
It remains to note that by Perron-Frobenius, $|\Sf_{\zeta^k}(c,b)|\sim \theta_1^k$ as $k\to \infty$, and the desired inequality $\chi(\Cc_\zeta) \le \half\log\theta_1$ follows.
\qed

\subsection{Singular spectrum via dimension estimates}

\begin{proof}[First proof of Lemma~\ref{lem-singul}]
It is well-known that the spectral measure of a $\Z$-action on the circle is closely related to the spectral measure on the line of the suspension corresponding to the constant roof function, i.e., when we take $\vec s = \vec 1 = (1,\ldots,1)^t$. See \cite[Prop.\,1.1]{DL} for a general treatment and \cite[Lem.\,5.6]{BerSol} for our specific case. (Incidentally, the relevant result about Fourier transforms  appeared in \cite[Theorem F]{Goldberg}, of which the authors of \cite{BerSol} were unaware at the time of its writing; we are grateful to Nir Lev for the reference.) In particular, singularity of the spectrum for the $\Z$-action is equivalent to the singularity of the spectrum for the corresponding suspension $\R$-action. Now the desired claim follows from
\cite[Cor. 4.5(iii)]{BuSo19} in the special case of a single substitution.
\end{proof}

\begin{proof}[Second proof of Lemma~\ref{lem-singul}] It is known that the spectral type of a primitive aperiodic substitution $\Z$-action $(X_\zeta,T,\mu)$ is
$$
\sum_{k\ge 0,\ a\in \Ak} 2^{-k} {\sig}_{\One_{\zeta^k[a]}},
$$
see \cite[Section 2.4]{BuSo14}, thus it is enough to show that the spectral measure ${\sig}_{\One_{\zeta^k[a]}}$ is singular. Recall that the lower dimension of a measure $\nu$ at $\om$ is defined by
$$
\und{d}(\nu,\om) = \liminf_{r\to 0} \frac{\log\nu(B(\om,r))}{\log r}.
$$
Let $\sig_a = \sig_{\One_a}$. Using a slightly different notation, \cite[Prop.\,7.2]{BuSo14} asserts that for any $\om\in [0,1]$,
\be \label{claim1}
\und{d}(\sig_a,\om) \ge 2 - \frac{2\max\{\chi^+_\zeta(\om\vec 1),0\}}{\log \theta_1}\,.
\ee
(The inequality in \cite[Prop.\,7.2]{BuSo14} is $\und{d}(\sig_a) \ge 2 - 2\chi^+_\zeta(\om\vec 1)/\log \theta_1$, but actually (\ref{claim1}) is proved there.) The same argument, using the discussion of
\cite[Section 2.4]{BuSo14}, gives, for any $\om\in [0,1]$ and $k\ge 0$,
\be \label{claim2}
\und{d}(\sig_{\One_{\zeta^k[a]}},\om) \ge 2 - \frac{2\max\bigl\{\chi^+_\zeta\bigl( E^k_\zeta (\om\vec 1)\bigr),0\bigr\}}{\log \theta_1}\,.
\ee
We claim that
\be \label{claim3}
\chi^+_\zeta\bigl( E^k_\zeta (\om\vec 1)\bigr) = \chi^+_\zeta(\om\vec 1)\ \ \mbox{for all}\ \om\in [0,1]\setminus \Ek,\ \mbox{with}\ \Ek\ \mbox{countable}.
\ee
Indeed,
\begin{eqnarray*}
\chi^+_\zeta\bigl( E^k_\zeta (\om\vec 1)\bigr) & = & \limsup_{n\to \infty} \frac{1}{n} \log\bigl\|\Cc_\zeta\bigl(E_\zeta^{n+k-1}(\om\vec 1) \bigr)\cdot \ldots \cdot \Cc_\zeta(E_\zeta^k(\om \vec 1))\bigr\| \\[1.2ex]
& = & \chi^+_\zeta(\om\vec 1),
\end{eqnarray*}
provided
$$
\det\bigl[\Mc_\zeta\bigl(E_\zeta^{k-1}(\om\vec 1) \bigr)\cdot \ldots \cdot \Cc_\zeta(\om \vec 1)\bigr]\ne 0.
$$
However, $\om\mapsto \det\Mc_\zeta(E_\zeta^\ell( \om\vec 1))$ is a nontrivial trigonometric polynomial of a single variable for any $\ell\in \N$, since $\det \Mc_\zeta(0) = \det \Sf_\zeta^t \ne 0$, and so it has countable many zeros. The claim (\ref{claim3}) follows.

Now we can conclude, repeating the argument of \cite[Cor.\,4.5(ii)]{BuSo19}, as follows. Let $f=\One_{\zeta^k[a]}$ for some $a\in \Ak$ and $k\ge 0$. By the assumption (\ref{cond5}), (\ref{claim2}) and (\ref{claim3}),
\be \label{claim4}
\und{d}(\sig_f,\om)  > 1\ \ \mbox{for Lebesgue-a.e.}\ \om\in [0,1].
\ee
Let $\sig_f = h\cdot m_1 + (\sig_f)_{ \rm sing}$, with $h\in L^1[0,1]$, be the Lebesgue decomposition. By (\ref{claim4}), 
$$
\lim_{r\to 0} \frac{\sig_f(B(\om,r))}{2r} = 0\ \ \mbox{for Lebesgue-a.e.}\ \om\in [0,1],
$$
hence $h=0$ a.e., and $\sig_f$ is purely singular, as desired.
\end{proof}

\medskip

%%%%%%%%%%%%%%%%%%%%%%%%%%%%%%%%%%%%%

\subsection{Total irreducibility} We need the following elementary lemma.

\begin{lemma} \label{lem-irred}
Let $A$ be an integer matrix with an irreducible (over $\Q$) characteristic polynomial $p(x)$. Let $n\in \N$ and  suppose that one of the eigenvalues satisfies
\be \label{condit1}
\theta_1^n \ne \theta_j^n,\ \ \mbox{for all}\ j>1.
\ee
Then $A^n$ has an irreducible characteristic polynomial.
\end{lemma}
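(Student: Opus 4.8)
The plan is to argue by contradiction using Galois theory. Let $p(x)$ be the characteristic polynomial of $A$, irreducible over $\Q$ of degree $d$, with roots $\theta_1,\ldots,\theta_d$ (the eigenvalues of $A$) lying in a splitting field $L/\Q$; since $p$ is irreducible, the Galois group $G=\mathrm{Gal}(L/\Q)$ acts transitively on $\{\theta_1,\ldots,\theta_d\}$. The characteristic polynomial of $A^n$ is $q(x)=\prod_{j=1}^d (x-\theta_j^n)$, a polynomial in $\Z[x]$ fixed by $G$. If $q$ were reducible over $\Q$, then because $G$ permutes the roots $\theta_j^n$ of $q$, the orbits of $G$ on $\{\theta_j^n : 1\le j\le d\}$ would have size strictly less than $d$; in particular the $G$-orbit of $\theta_1^n$ would be a proper subset, so some nontrivial coincidence $\theta_i^n=\theta_j^n$ with $i\ne j$ must occur among the values. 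The first step, then, is to reduce the problem to showing that the map $\theta_j\mapsto \theta_j^n$ is injective on the eigenvalues, i.e. that $\theta_i^n=\theta_j^n$ forces $i=j$.

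Next I would leverage transitivity of $G$ together with the hypothesis \eqref{condit1}. Suppose $\theta_i^n=\theta_j^n$ for some $i\ne j$. Pick $g\in G$ with $g(\theta_i)=\theta_1$ (possible by transitivity), and set $\theta_k=g(\theta_j)$; then $k\ne 1$ since $g$ is a bijection and $j\ne i$, while applying $g$ to $\theta_i^n=\theta_j^n$ gives $\theta_1^n=\theta_k^n$ for some $k>1$. This directly contradicts \eqref{condit1}. Hence the $\theta_j^n$ are pairwise distinct, the $d$ roots of $q$ are all distinct, and the $G$-orbit of each is all of them, so $q$ is irreducible over $\Q$ — it is the minimal polynomial of $\theta_1^n$, which has degree $d=\deg q$. (One should note $\theta_j\ne 0$ so that $\theta_j\mapsto\theta_j^n$ makes sense as a genuine bijection on a set of size $d$; this holds because $p$ is irreducible of degree $\ge 1$ with $p(0)=\pm\det A$, and if some eigenvalue were $0$ then $x\mid p(x)$, forcing $p(x)=x$ and $d=1$, a trivial case where the statement is immediate.)

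The main obstacle is really just getting the Galois-orbit bookkeeping right: one has to be careful that $q$ has no repeated roots before concluding that its irreducible factors correspond exactly to $G$-orbits, and that the particular eigenvalue singled out in \eqref{condit1} (the Perron–Frobenius one, $\theta_1$) can be matched to an arbitrary offending pair via a single Galois element. Both points follow cleanly from transitivity, so no delicate estimates are needed. An alternative, if one wants to avoid invoking the splitting field explicitly, is to observe that $\Q(\theta_1)=\Q(\theta_1^n)$ would follow once we know $\theta_1^n$ has degree $d$ over $\Q$; but the cleanest route is the orbit argument above, and I would present it that way.
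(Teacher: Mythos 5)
Your proof is correct and rests on the same idea as the paper's: transitivity of $\Q$-conjugation on the roots of the irreducible polynomial $p$ transports any coincidence $\theta_i^n=\theta_j^n$ to one involving $\theta_1^n$, which \eqref{condit1} forbids, and then the full orbit forces the characteristic polynomial of $A^n$ to be the minimal polynomial of $\theta_1^n$. The paper packages the same mechanism slightly differently --- a nontrivial monic factor $r(x)$ of $\prod_j(x-\theta_j^n)$ yields $r(x^n)\in\Z[x]$ vanishing at some $\theta_j$, hence at its conjugate $\theta_1$, contradicting \eqref{condit1} --- but the two arguments are interchangeable here.
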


Conditon (\ref{condit1}) obviously holds when $A$ is a primitive matrix, since it has a dominant eigenvalue.

\begin{proof}
Let
$$
p(x) = \prod_{j=1}^d (x-\theta_j),\ \ \ F_n(x) =\prod_{j=1}^d (x-\theta_j^n),
$$
so that $F_n$ is the characteristic polynomial of $A^n$. Suppose that $F_n$ is reducible over $\Q$, then we have a factorization into a non-trivial product of polynomials in $\Z[x]$:
$$
F_n(x) = q(x) r(x),\ \ q(x) = \prod_{j=1}^k (x-\theta_j^n),\ \ r(x) = \prod_{j=k+1}^d (x-\theta_j^n),
$$
for an appropriate enumeration of the roots. Then $r(x^n) \in \Z[x]$ has $\theta_d$ as a root, hence $\theta_1$ is its root as well, being a conjugate of $\theta_d$. However,
$$
r(\theta_1^n) =  \prod_{j=k+1}^d (\theta^n_1-\theta_j^n) \ne 0,
$$
by the assumption (\ref{condit1}),
and the contradiction concludes the proof.
\end{proof}

%%%%%%%%%%%%%%%%%%%%%%%%%%%

\section{Examples} 

\subsection{Non-Pisot substitutions with singular spectrum}

\begin{example} \label{ex1} {\em
Consider the substitution $\zeta_m: 0\mapsto 0^m 1,\ 1\mapsto 012,\ 2 \mapsto 1$, for $m$ sufficiently large (say, $m\ge 3$). The characteristic polynomial of the matrix $\Sf_\zeta$ is $$p(x) = -\bigl(x^3 - (m+1)x^2 + (m-2)x + m\bigr).$$ One can check that $p$ has three real roots: $\theta_1 \in (m,m+1)$ (close to $m$ for large $m$), $\theta_2 \in (1,2)$, and 
$\theta_3 \in (-1,0)$. This is therefore a non-Pisot substitution. It is easy to see that $p$ is irreducible.

The matrix-valued function defining the spectral cocycle is
$$
\Mc_{\zeta_m}(z_0,z_1,z_2) = \left( \begin{array}{ccc} 1 + z_0 + \cdots + z_0^{m-1} & z_0^m & 0 \\ 1 & z_0 & z_0 z_1 \\ 0 & 1 & 0 \end{array} \right), \ \ \ z_j = e^{-2\pi i \xi_j}.
$$
In order to verify (\ref{cond2}), we can use the method of Baake, Grimm, and Ma\~nibo, see \cite[Section 5.1 and Appendix]{BGM}. In order to get sharp results, numerical estimates are needed, but in order to obtain a crude bound, we can use $k=1$ in the Frobenius norm of the matrix, which gives
$$
{\|\Mc_{\zeta_m}(\xi)\|}_F^2 = 5 + {\Bigl|\frac{z_0^m-1}{z_0-1}\Bigr|}^2.
$$
Since there is no dependence on $z_1$ and $z_2$, integration reduces to one-dimensional.
Using a crude bound $|z_0^m-1|\le 2$ and writing $z:=z_0=e^{-2\pi i t}$ to simplify notation, we obtain
\be \label{mahler}
\Ik:=\int_{\T^3} \log {\|\Mc_{\zeta_m}(\xi)\|}_F^2\,dm_3(\xi) \le \int_0^1 \log(5|z-1|^2 + 4) \,dt - \int_0^1 \log(|z-1|^2) \,dt.
\ee
Since $5|z-1|^2 + 4=5(2 -z - z^{-1})+4 = |5z^2-14z + 5|$, the right-hand side of (\ref{mahler}) equals
$$
\mfr(5z^2-14z + 5) - \mfr((z-1)^2) = \log(7 + 2\sqrt{6}),
$$
{ using \eqref{eq-Mahler2},
where $\mfr(p)$ is the logarithmic Mahler measure of a polynomial $p$, see \eqref{eq-Mahler}.}
Thus,
$$
\chi(\Mc_{\zeta_m}) \le \half \int_{\T^3} \log {\|\Mc_{\zeta_m}(\xi)\|}_F^2\,dm_3(\xi)\le \half \cdot \log(7 + 2\sqrt{6}).
$$
It follows that $\chi(\Mc_{\zeta_m}) \le \half \log  m < \half \log \theta_1$ for $m\ge 12 > 7+2\sqrt{6}$, and so for such substitutions $\zeta_m$ we obtain pure singular spectrum by 
Theorem~\ref{thm1}.
}
\end{example}

The same method can be extended to many other families of substitutions. We start with a general lemma. Let $\Ak = \{0,\ldots,d-1\}$ be again an alphabet of arbitrary size $d\ge 2$. For $\alpha\ne \beta$ consider the following substitutions on $\Ak$:
\be \label{special}
\zeta_{\alpha\beta}: \left\{ \begin{array}{ll}  j\mapsto j & (j\ne \alpha) \\ \alpha \mapsto \alpha\beta\end{array} \right.;\ \ \ \ \ \ \ \  
\wt\zeta_{\alpha\beta}: \left\{ \begin{array}{ll}  j\mapsto j & (j\ne \alpha) \\ \alpha \mapsto \beta\alpha\end{array} \right.
\ee

\begin{lemma} \label{lem-ex1}
{\rm (i)} Let $\zeta^{(n)} = \sig_1\circ \zeta_{\alpha\beta}^n\circ\sig_2$ or $\zeta^{(n)} = \sig_1\circ \wt\zeta_{\alpha\beta}^n\circ\sig_2$ where $\sig_1$ and $\sig_2$ are arbitrary substitutions on $\Ak$,
such that the substitution matrices $\Sf_{\sig_j}$ are non-singular, $j=1,2$.
Then
\be \label{estim}
\int_{\T^d} \log\bigl\|\Mc_{\zeta^{(n)}}(\xi)\bigr\|_F\,dm_d(\xi) \le \half\log\bigl(d+2 + 2\sqrt{d+1}\bigr) +  \sum_{j=1}^2 \log{\|\Sf_{\sig_j}\|}_F .
\ee

 {\rm (ii)} Suppose, in addition, that least one of $\sig_1,\sig_2$ has a substitution matrix with all entries strictly positive, and  the characteristic polynomial of  $\Sf_{\zeta^{(n)}}$ is irreducible.
% Then, for $n\in \N$ sufficiently large, holds 
%\be \label{conduk1}
%\int_{\T^d} \log\bigl\|\Mc_{\zeta^{(n)}}(\xi)\bigr\|\,dm_d(\xi) < \frac{\log\theta(\zeta^{(n)})}{2}\,,
%\ee
Then the spectrum of the $\Z$-action $(X_{\zeta^{(n)}},\mu,T)$ is pure singular, provided that
\be \label{conduk2}
n+1 > \bigl(d+2 + 2\sqrt{d+1}\bigr) \cdot {\|\Sf_{\sig_1}\|}_F^2 \cdot {\|\Sf_{\sig_2}\|}_F^2.
\ee
\end{lemma}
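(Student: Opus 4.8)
The plan for part~(i) is to peel off the outer substitutions $\sig_1,\sig_2$ by means of the cocycle identity \eqref{most} and reduce matters to an explicit one--variable integral for the middle factor $\zeta_{\alpha\beta}^n$. Applying \eqref{most} twice to $\zeta^{(n)}=\sig_1\circ\zeta_{\alpha\beta}^n\circ\sig_2$ gives
$$
\Cc_{\zeta^{(n)}}(\xi)=\Cc_{\sig_2}\!\bigl((\Sf_{\zeta_{\alpha\beta}}^t)^n\Sf_{\sig_1}^t\xi\bigr)\cdot\Cc_{\zeta_{\alpha\beta}^n}\!\bigl(\Sf_{\sig_1}^t\xi\bigr)\cdot\Cc_{\sig_1}(\xi),
$$
and, since by observation~(i) the entries of $\Cc_{\sig_j}(\,\cdot\,)$ are dominated in modulus by those of $\Sf_{\sig_j}^t$, submultiplicativity of the Frobenius norm yields ${\|\Cc_{\zeta^{(n)}}(\xi)\|}_{\rm F}\le{\|\Sf_{\sig_1}\|}_{\rm F}\,{\|\Sf_{\sig_2}\|}_{\rm F}\,{\|\Cc_{\zeta_{\alpha\beta}^n}(\Sf_{\sig_1}^t\xi)\|}_{\rm F}$. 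I would then take logarithms, integrate over $(\T^d,m_d)$, and apply the change of variables $\eta=\Sf_{\sig_1}^t\xi\pmod{\Z^d}$, which preserves $m_d$ because $\Sf_{\sig_1}$, being a nonsingular integer matrix, induces a surjective endomorphism of $\T^d$. This reduces \eqref{estim} to the bound $\int_{\T^d}\log{\|\Cc_{\zeta_{\alpha\beta}^n}(\eta)\|}_{\rm F}\,dm_d(\eta)\le\half\log(d+2+2\sqrt{d+1})$.

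To prove that bound I would compute $\Mc_{\zeta_{\alpha\beta}^n}$ (equivalently $\Mc_{\wt\zeta_{\alpha\beta}^n}$) directly from Definition~\ref{def-matrix}: since $\zeta_{\alpha\beta}^n(\alpha)=\alpha\beta^n$ and $\zeta_{\alpha\beta}^n(j)=j$ for $j\ne\alpha$, this matrix agrees with the identity except in row $\alpha$, and one gets ${\|\Cc_{\zeta_{\alpha\beta}^n}(\eta)\|}_{\rm F}^2=d+|1+z+\cdots+z^{n-1}|^2$ with $z=e^{-2\pi i\eta_\beta}$ --- a function of $\eta_\beta$ alone. Using the crude bound $|z^n-1|\le2$ and writing $z=e^{-2\pi i t}$, the integral is at most $\half\bigl(\int_0^1\log(d|z-1|^2+4)\,dt-\int_0^1\log|z-1|^2\,dt\bigr)$; both terms are logarithmic Mahler measures, evaluated exactly as in Example~\ref{ex1}: $\int_0^1\log|z-1|^2\,dt=\mfr\bigl((z-1)^2\bigr)=0$, while $d|z-1|^2+4=|dz^2-(2d+4)z+d|$ on $\T$ and the roots of $dx^2-(2d+4)x+d$ are reciprocal with larger root $(d+2+2\sqrt{d+1})/d>1$, so Jensen's formula gives $\mfr\bigl(dz^2-(2d+4)z+d\bigr)=\log(d+2+2\sqrt{d+1})$. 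Combining the two steps proves \eqref{estim}.

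For part~(ii) I would start from the matrix identity $\Sf_{\zeta^{(n)}}=\Sf_{\sig_1}(I+nE_{\beta\alpha})\Sf_{\sig_2}$, where $E_{\beta\alpha}=e_\beta e_\alpha^t$ is the matrix unit and $(\Sf_{\zeta_{\alpha\beta}})^n=(\Sf_{\wt\zeta_{\alpha\beta}})^n=I+nE_{\beta\alpha}$, since $E_{\beta\alpha}$ is nilpotent when $\alpha\ne\beta$. If one of $\Sf_{\sig_1},\Sf_{\sig_2}$ is strictly positive then $\Sf_{\zeta^{(n)}}\ge\Sf_{\sig_1}\Sf_{\sig_2}$ is strictly positive (the other factor, being nonsingular, has no zero column/row), so $\zeta^{(n)}$ is primitive, and $\det\Sf_{\zeta^{(n)}}=\det\Sf_{\sig_1}\det\Sf_{\sig_2}\ne0$. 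The quantitative heart of the argument is the lower bound $\theta_1\ge n+1$ for the Perron--Frobenius eigenvalue: using $I\ge E_{\beta\beta}$ entrywise,
$$
\Sf_{\zeta^{(n)}}=\Sf_{\sig_1}\Sf_{\sig_2}+n\,\Sf_{\sig_1}E_{\beta\alpha}\Sf_{\sig_2}\ \ge\ \Sf_{\sig_1}(E_{\beta\beta}+nE_{\beta\alpha})\Sf_{\sig_2}=u\,\wt w^{\,t},
$$
with $u=\Sf_{\sig_1}e_\beta$ and $\wt w^{\,t}=(e_\beta+ne_\alpha)^t\Sf_{\sig_2}$; when $\Sf_{\sig_1}>0$ (so $u$ has all entries $\ge1$) and rows $\alpha,\beta$ of $\Sf_{\sig_2}$ are nonzero (by nonsingularity), $\rho(u\,\wt w^{\,t})=\langle\wt w,u\rangle\ge n+1$, whence $\theta_1\ge n+1$ by monotonicity of the spectral radius on nonnegative matrices (the case $\Sf_{\sig_2}$ strictly positive is handled symmetrically, factoring $e_\alpha^t\Sf_{\sig_2}$ out on the right). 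In particular $\theta_1>1$, so $|(\zeta^{(n)})^m(a)|\to\infty$ and the primitive substitution $\zeta^{(n)}$ is aperiodic; its characteristic polynomial is irreducible over $\Q$ by hypothesis and, being of degree $d\ge2$ with $\theta_1>1$, has no root--of--unity eigenvalue, so $\chi(\Cc_{\zeta^{(n)}})$ is well defined and Lemma~\ref{lem-vspom1} applies. Finally, taking $k=1$ in \eqref{cond2} and using part~(i),
$$
2\chi(\Cc_{\zeta^{(n)}})\le 2\int_{\T^d}\log{\|\Cc_{\zeta^{(n)}}(\xi)\|}_{\rm F}\,dm_d\le\log\!\bigl((d+2+2\sqrt{d+1})\,{\|\Sf_{\sig_1}\|}_{\rm F}^2\,{\|\Sf_{\sig_2}\|}_{\rm F}^2\bigr)<\log(n+1)\le\log\theta_1,
$$
the strict inequality being precisely \eqref{conduk2}; hence \eqref{cond1} holds for $\zeta^{(n)}$ and Theorem~\ref{thm1} gives pure singular spectrum.

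I expect the main obstacle to be the bound $\theta_1\ge n+1$. The naive estimate ``$\theta_1\ge$ minimal column sum'' is useless here because $\zeta_{\alpha\beta}^n$ by itself has spectral radius $1$; what makes the bound work is the interaction of the off--diagonal block $nE_{\beta\alpha}$ with a \emph{strictly positive} outer matrix, which produces a rank--one minorant $u\,\wt w^{\,t}$ of spectral radius $\langle\wt w,u\rangle\ge n+1$. This is precisely where the positivity hypothesis on $\Sf_{\sig_1}$ or $\Sf_{\sig_2}$ is used beyond securing primitivity; everything else --- the two cocycle applications, the entrywise description of $\Mc_{\zeta_{\alpha\beta}^n}$, and the Mahler--measure computation borrowed from Example~\ref{ex1} --- is routine.
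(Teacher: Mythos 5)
Your proof is correct and follows essentially the same route as the paper's: part (i) via the cocycle factorization, $m_d$-invariance under the toral endomorphisms, and the one-variable Mahler-measure computation for $\Mc_{\zeta_{\alpha\beta}^n}$, and part (ii) via the bound $\theta_1\ge n+1$ (which the paper derives by exhibiting a column of $\Sf_{\zeta^{(n)}}$ with all entries $\ge n+1$ and using $\Sf\vec{1}\ge(n+1)\vec{1}$, while you use an equivalent rank-one minorant) combined with Lemma~\ref{lem-vspom1} at $k=1$ and Theorem~\ref{thm1}. One small, inessential slip: $\theta_1>1$ does not by itself imply aperiodicity (consider $0\mapsto 01,\ 1\mapsto 01$); in part (ii) aperiodicity instead follows from the assumed irreducibility of the characteristic polynomial in degree $d\ge 2$, which forces the letter-frequency vector, hence a periodic orbit, to be impossible.
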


\begin{proof} (i)
We  will only consider the case of $\zeta^{(n)}  = \sig_1\circ \zeta_{\alpha\beta}^n\circ\sig_2$; the second case is proved in exactly the same way. We have by the definition of the spectral cocycle and invariance of the measure $m_d$ under toral endomorphisms:
\begin{eqnarray}
\int_{\T^d} \log\bigl\|\Mc_{\zeta^{(n)}}(\xi)\bigr\|_F\,dm_d(\xi) & = & \int_{\T^d} \log\bigl\|\Mc_{\sig_2}(\Sf^t_{\zeta_{\alpha\beta}^n}\Sf_{\sig_1}^t\xi)\Mc_{\zeta_{\alpha\beta}^n}(\Sf_{\sig_1}^t\xi)\Mc_{\sig_1}(\xi)\bigr\|_F\,dm_d(\xi) \nonumber  \\
& \le & \int_{\T^d} \log\bigl\|\Mc_{\zeta_{\alpha\beta}^n}(\xi)\bigr\|_F\,dm_d(\xi) + \sum_{j=1}^2\int_{\T^d} \log\bigl\|\Mc_{\sig_j}(\xi)\bigr\|_F\,dm_d(\xi) \nonumber \\
& \le & \int_{\T^d} \log\bigl\|\Mc_{\zeta_{\alpha\beta}^n}(\xi)\bigr\|_F\,dm_d(\xi) + \sum_{j=1}^2 \log\bigl\|\Sf_{\sig_j}\bigr\|_F\,. \label{lia1}
\end{eqnarray}
The last inequality holds since all entries of the matrix-function $\Mc_\xi(\xi)$ are dominated in absolute value by the corresponding entries of $\Sf_\zeta^t$, for any substitution $\zeta$.
Now, the entries of the matrix-function $\Mc_{\zeta_{\alpha\beta}^n}(\xi)$ are
$$
\bigl[\Mc_{\zeta_{\alpha\beta}^n}(\xi)\bigr]_{jk} = \left\{ \begin{array}{cl} 1, & \mbox{if}\ j=k; \\ z_\alpha(1 + z_\beta + \cdots + z_\beta^{n-1}), & \mbox{if}\ j=\alpha,\ k=\beta; \\
0, & \mbox{else,} \end{array} \right.
$$
where $z_j = e^{-2\pi i \xi_j}$ for $j\in \Ak$.
We can therefore estimate, as in Example~\ref{ex1}, using that $|z_\beta^n-1|\le 2$ and then writing $z_\beta = e^{-2\pi i t}$,
\begin{eqnarray}
\int_{\T^d} \log {\|\Mc_{\zeta_{\alpha\beta}^n}(\xi)\|}_F^2\,dm_d(\xi) & = & \int_0^1 \log\left(d + {\Bigl|\frac{z_\beta^n-1}{z_\beta-1}\Bigr|}^2\right)\,dt \nonumber\\
& \le & \int_0^1 \log(d|e^{2\pi i t}-1|^2 + 4) \,dt - \int_0^1 \log(|e^{2\pi i t}-1|^2) \,dt \nonumber \\
& = & \mfr\bigl[dz^2 - (2d+4)z + d\bigr] -\mfr\bigl[(z-1)^2\bigr] = \log\bigl[(d+2)+2\sqrt{d+1}\bigr], \nonumber
\end{eqnarray}
and (\ref{estim}) follows, in view of (\ref{lia1}).

\medskip

(ii) The desired claim will follow from Theorem~\ref{thm1} and Lemma~\ref{lem-vspom1}, once we show that
$\theta_1(\Sf_{\zeta^{(n)}})\ge n+1$. To prove the latter, note that the substitution matrix $\Sf = \Sf_{\zeta^{(n)}}$ has  every entry greater or equal to $n+1$
in at least one of its rows or columns, depending on whether $\Sf_{\sig_1}$ or $\Sf_{\sig_2}$ is strictly positive. Suppose that there is such a column, otherwise, pass to the transpose matrix. We then clearly have $\Sf\,\vec{1}\ge (n+1)\vec{1}$, where $\vec 1 = (1,\ldots,1)^t$ and the inequality is understood component-wise. Thus $\Sf^k \vec 1 \ge (n+1)^k \vec 1$ for $k\in \N$, which implies the claim about the Perron-Frobenius eigenvalue and
completes the proof of the lemma.
\end{proof}

\subsection{A family of self-similar interval exchanges}
Interval exchange transformations (IET's) form an important and much studied class of dynamical systems. An interval exchange $f(\lam,\pi)$ on $d$ intervals is determined by a permutation $\pi \in \FrS_d$ and a positive vector $\lam = (\lam_1,\ldots,\lam_d) \in \R^d_+$. It is a piecewise isometry of an interval $I=\bigcup_{j=1}^d I_j$, where $I_j = \bigl[\sum_{k<j}\lam_k, \sum_{k\le j}\lam_k\bigr)$, for
$j=1,\ldots,d$, in which the intervals $I_j$ are translated and exchanged according to the permutation $\pi$. 
To be precise,
$$
f(\lam,\pi):\ x\mapsto x + \sum_{\pi(j)<\pi(i)} \lam_j - \sum_{j < i} \lam_j,\ \ x\in I_i,
$$
which means that {\em after the exchange} the interval $I_{j}$ is in the $\pi(j)$-th place. Here we are using the convention of Veech \cite{Veech82}; some authors use different notation.
Assume that the permutation is irreducible, i.e., $\pi\{1,\ldots,k\}\ne \{1,\ldots,k\}$ for $k<d$. 
We do not review the basic facts on IET's, but refer the reader to the vast literature; e.g., \cite{Viana,Yoccoz,Zorich}. Whereas a lot is known about ``typical'' or ``generic'' IET's, specific examples are often harder to analyze. An IET $f=f(\lam,\pi)$ on $[0,1)$ is called {\em self-similar} if there is $\alpha \in (0,1)$ such that the first return map of $f$ to $[0,\alpha)$, after rescaling, is equal to $f$. Such IET's were studied, e.g., in \cite{BSU,CG97,CGM}. They are also closely related to {\em pseudo-Anosov diffeomorphisms}, see, e.g.,  \cite{Veech82, Lanneau}. It is well-known that a self-similar IET is almost-topologically conjugate to a primitive substitution. We recall the construction briefly, following \cite{CG97,CGM}.

Suppose that $f$ is a self-similar IET on $[0,1)$. Denote $I_j^{(1)} = \alpha I_j \subset [0,\alpha)$ for $j\in \Ak = \{1,\ldots,d\}$. Let $R$ be the renormalization matrix given by
$$
R_{i,j}  = \#\left\{0 \le k \le r_j-1:\ f^k(I_j^{(1)}) \subset I_i\right\},
$$
where $r_j$ is the first return time of $I_j^{(1)}$ to $[0,\alpha)$. The substitution $\zeta_f:\Ak\to \Ak$ is defined by
\be \label{ietsub}
\zeta_f(j) = u_0\ldots u_{r_j-1},\ \ \ \mbox{where}\ \ \ u_k = i \iff f^k(I_j^{(1)}) \subset I_i,
\ee
so that $R=\Sf_{\zeta_f}$ is the substitution matrix.
One can show that $\zeta_f$ is well-defined and minimal, and the substitution dynamical system $(X_{\zeta_f},T)$ is conjugate to $f$ restricted to $[0,1)$ minus the union of orbits of discontinuities of $f$, see \cite{CG97,CGM}.

Self-similar IET's correspond to cycles in the {\em Rauzy diagram}, see \cite{Rauzy,Veech82,Lanneau}.  We recall the definition of {\em Rauzy induction} (Rauzy algorithm) in brief.
Let $\pi$ be an irreducible permutation, and suppose that $(\lam,\pi)$ is such that $\lam_d \ne \lam_{\pi^{-1}(d)}$. Then the first return map of $f(\lam,\pi)$ to the interval
$$
\Bigl[0, \sum_{i\le d } \lam_i - \min\{\lam_{\pi^{-1}(d)},\lam_d\} \Bigr)
$$
is an irreducible IET on $d$ intervals as well, see, e.g., \cite{Viana,Yoccoz,Zorich}. If $\lam_d < \lam_{\pi^{-1}(d)}$, we say that this is an operation of type ``$a$''; otherwise, an operation of type ``$b$''. The {\em Rauzy diagram} is a directed labeled graph, whose vertices are permutations of $\Ak=\{1,\ldots,d\}$ and the edges lead to permutations obtained by applying one of the operations. The edges are labeled ``$a$'' or ``$b$'' depending on the type of the operation. One can easily find the substitutions corresponding to each Rauzy move, according to the same rule as in (\ref{ietsub}). Denoting by $\zeta_{\pi,a}$ and $\zeta_{\pi,b}$ the substitution corresponding to the ``$a$'' and ``$b$'' moves from the permutation $\pi$ correspondingly, we obtain:
\be\label{def-substi}
\zeta_{\pi,a}:\ \left\{ \begin{array}{cl} i\mapsto i, & i \le \pi^{-1}d; \\
                                                         (\pi^{-1}d+1) \mapsto (\pi^{-1}d)d, & \\
                                                         i\mapsto i-1, & \pi^{-1}d+1 < i \le d; \end{array} \right.\ \ \ 
\zeta_{\pi,b}:\ \left\{ \begin{array}{cl} i\mapsto i, & i \ne \pi^{-1}d; \\
                                                         \pi^{-1}d\mapsto (\pi^{-1}d)d. \end{array} \right.                                                         
\ee

\begin{example}{\em 
Our family of examples is a modification of the ones considered by Lanneau \cite[Section 3]{Lanneau}; in fact, they coincide for $n=1$. We consider the closed loop $\gam$ in the Rauzy diagram based at $\pi = (4321)$, defined by the Rauzy moves
$$
\gam = b\to (a\to a) \to b \to (a)^n \to b\to (a\to a\to a),
$$
where parentheses indicate ``simple'' loops of the diagram. More explicitly, this is the cycle
$$
(4321) \stackrel{\textstyle b}{\longrightarrow} (2431) \stackrel{\textstyle a}{\longrightarrow} (2413) \stackrel{\textstyle a}{\longrightarrow} (2431) \stackrel{\textstyle b}{\longrightarrow}
 \stackrel{\textstyle{\ \ \ \curvearrowright a^n}}{(3241)} \ 
\stackrel{\textstyle b}{\longrightarrow} (4321) \stackrel{\textstyle a}{\longrightarrow} (4132) \stackrel{\textstyle a}{\longrightarrow} (4213) \stackrel{\textstyle a}{\longrightarrow} (4321) 
$$
See, e.g., \cite[p.581]{Bufetov_jams} for the full diagram.
Using the formulas (\ref{def-substi}) and
 composing the substitutions we obtain
\be\label{def-sub}
\zeta(\gam) = \zeta^{(n)} := \zeta_{14}\circ \sig_3^2\circ \zeta_{24}\circ {\bigl(\wt\zeta_{43}\bigr)}^n \circ \zeta_{34}\circ \sig_2^3: \ \ \ \begin{array}{l} 1 \mapsto 14 \\ 2 \mapsto 14224 \\ 3\mapsto 14(23)^{n+1}24\\
4\mapsto 14(23)^n 24\end{array}
\ee
as follows from the diagram below. We intentionally put the arrows pointing from right to left,  since this is more convenient for computing the composition of substitutions:
$$
\begin{array}{r} b\ \ \  \\14 \mapsfrom 1 \\ 2 \mapsfrom 2 \\ 3 \mapsfrom 3 \\ 4\mapsfrom 4 \\[1.1ex] \zeta_{14} \ \end{array}\ 
\begin{array}{r} a\ \ \  \\ 1 \mapsfrom 1 \\ 2 \mapsfrom 2 \\ 24 \mapsfrom 3 \\ 3\mapsfrom 4 \\[1.1ex] \sig_3\ \ \end{array}\ 
\begin{array}{r} a\ \ \  \\ 1 \mapsfrom 1 \\ 2 \mapsfrom 2 \\ 24 \mapsfrom 3 \\ 3\mapsfrom 4 \\[1.1ex] \sig_3\ \ \end{array}\ 
\begin{array}{r} b\ \ \  \\1 \mapsfrom 1 \\ 24\mapsfrom 2 \\ 3 \mapsfrom 3 \\ 4\mapsfrom 4 \\[1.1ex] \zeta_{24}\ \ \end{array}\ 
\begin{array}{r} a^n\ \  \\ 1 \mapsfrom 1 \\ 2 \mapsfrom 2 \\ 3 \mapsfrom 3 \\ 3^n4\mapsfrom 4 \\[1.1ex] {\bigl(\wt\zeta_{43}\bigr)}^n \end{array}\ 
\begin{array}{r} b\ \ \  \\1 \mapsfrom 1 \\ 2\mapsfrom 2 \\ 34 \mapsfrom 3 \\ 4\mapsfrom 4 \\[1.1ex] \zeta_{34} \ \ \end{array}\ 
\begin{array}{r} a\ \ \  \\ 1 \mapsfrom 1 \\ 14 \mapsfrom 2 \\ 2 \mapsfrom 3 \\ 3\mapsfrom 4 \\[1.1ex] \sig_2\ \  \end{array}\ 
\begin{array}{r} a\ \ \  \\ 1 \mapsfrom 1 \\ 14 \mapsfrom 2 \\ 2 \mapsfrom 3 \\ 3\mapsfrom 4 \\[1.1ex] \sig_2\ \ \end{array}\ 
\begin{array}{r} a\ \ \  \\ 1 \mapsfrom 1 \\ 14 \mapsfrom 2 \\ 2 \mapsfrom 3 \\ 3\mapsfrom 4 \\[1.1ex] \sig_2\ \ \end{array}\ 
$$
Here the notation for the substitutions $\zeta_{\alpha\beta}$ and $\wt\zeta_{\alpha\beta}$ agrees with that of (\ref{special}); the notation for $\sig_2$ and $\sig_3$ is introduced here for convenience. Notice that sometimes the same substitution corresponds to different ``arrows'' in the Rauzy diagram. The substitution matrix is
$$
\Sf_{\zeta^{(n)}} = \left(\begin{array}{cccc} 1 & 1 & 1 & 1 \\ 0 & 2 & (n+2) & (n+1) \\ 0 & 0 & (n+1)& n \\ 1 & 2 & 2 & 2 \end{array} \right),
$$
and its characteristic polynomial is
$$
p_n(x) = x^4 - (n+6) x^3 + (10+n) x^2 - (n+6) x + 1.
$$
Notice that this is a reciprocal polynomial, as characteristic polynomials of all renormalization matrices of the Rauzy induction, which follows from the fact that it preserves a certain symplectic form; see \cite{Veech82,Viana,Yoccoz}. 
{ The roots of $p_n$ can be found by a well-known method; see e.g.\ \cite[Section 6.7]{MaMoYo}.
Namely, $\lam$ is a root of $x^4 + ax^3 + bx^2 + ax + 1$ if and only if $\lam+ \lam^{-1}$ is a root of $y^2 +ay + (b-2)$, so that if  $\lam$ is a root of $p_n(x)$, then
$\lam+ \lam^{-1}$ is a root of $y^2 - (n+6) y - (n+8)$, which implies
$$
\lam+ \lam^{-1} - 2 = \frac{(n+2) \pm \sqrt{(n+2)^2 + 4n}}{2}.
$$
It follows that the polynomial $p_n$ is irreducible, and $\theta_1$ is a Salem number for all $n\ge 1$, 
since $n+2 < \sqrt{(n+2)^2 + 4n} < (n+2) + 2$. We are grateful to the referee who suggested this argument.
}
}
\end{example}

From Lemma~\ref{lem-ex1}(i) we now obtain

\begin{corollary}
The $\Z$-action corresponding to the substitution $\zeta^{(n)}$, defined in (\ref{def-sub}), is weakly mixing for all $n\ge 1$. The spectrum is purely singular, for all
$$
n \ge 922  > (6+2\sqrt{5})\cdot 88.
$$
\end{corollary}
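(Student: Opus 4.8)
The plan is to verify the two assertions separately. Weak mixing (for every $n\ge 1$) will be a soft consequence of the theory of eigenvalues of primitive substitutions, equivalently of self-similar interval exchanges, while singularity (for $n\ge 922$) will be obtained by combining the estimate of Lemma~\ref{lem-ex1}(i) with Theorem~\ref{thm1}; computing the relevant Frobenius norms and a lower bound for $\theta_1$ is all that is needed beyond those two results.

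\medskip

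\emph{Singularity.} By (\ref{def-sub}) we may write $\zeta^{(n)}=\Sigma_1\circ(\wt\zeta_{43})^n\circ\Sigma_2$, where $\Sigma_1:=\zeta_{14}\circ\sig_3^2\circ\zeta_{24}$ and $\Sigma_2:=\zeta_{34}\circ\sig_2^3$; this is exactly the situation of Lemma~\ref{lem-ex1}(i) with $d=4$ and $\alpha\beta=43$. First I would compute the two composite substitutions: carrying out the compositions gives $\Sigma_1\colon 1\mapsto 14,\ 2\mapsto 224,\ 3\mapsto 23,\ 4\mapsto 24$ and $\Sigma_2\colon 1\mapsto 1,\ 2\mapsto 12,\ 3\mapsto 134,\ 4\mapsto 14$, so that $\Sf_{\Sigma_1}$ and $\Sf_{\Sigma_2}$ are unimodular (in particular non-singular) with ${\|\Sf_{\Sigma_1}\|}_F^2=11$ and ${\|\Sf_{\Sigma_2}\|}_F^2=8$. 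Substituting these values into (\ref{estim}) and invoking Lemma~\ref{lem-vspom1} with $k=1$ gives
$$
\chi(\Mc_{\zeta^{(n)}})\ \le\ \int_{\T^4}\log\bigl\|\Mc_{\zeta^{(n)}}(\xi)\bigr\|_F\,dm_4(\xi)\ \le\ \tfrac12\log\bigl((6+2\sqrt5)\cdot 88\bigr).
$$
Next I would estimate $\theta_1$ from the Example: $p_n$ is irreducible over $\Q$ (so in particular no eigenvalue of $\Sf_{\zeta^{(n)}}$ is a root of unity, and $\chi(\Mc_{\zeta^{(n)}})$ is well defined), and the relation $\theta_1+\theta_1^{-1}-2=\tfrac12\bigl((n+2)+\sqrt{(n+2)^2+4n}\bigr)>n+2$ forces $\theta_1+\theta_1^{-1}>n+4$, hence $\theta_1>n+1$. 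Therefore, for $n\ge 922$ we have $\theta_1>n+1\ge 923>(6+2\sqrt5)\cdot 88\ge\exp\bigl(2\chi(\Mc_{\zeta^{(n)}})\bigr)$, i.e.\ the hypothesis (\ref{cond1}) of Theorem~\ref{thm1} holds. Since $\zeta^{(n)}$ is primitive (one checks $\Sf_{\zeta^{(n)}}^2>0$) and aperiodic (being conjugate to a genuine $4$-interval IET, $X_{\zeta^{(n)}}$ is infinite), with irreducible characteristic polynomial, Theorem~\ref{thm1} gives that the $\Z$-action has pure singular spectrum.

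\medskip

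\emph{Weak mixing.} Here I would show that $(X_{\zeta^{(n)}},T,\mu)$ has no eigenvalue other than $1$; since the system is uniquely ergodic, this is exactly weak mixing. For the \emph{rational} eigenvalues I would check that the height of $\zeta^{(n)}$ equals $1$: every image $\zeta^{(n)}(b)$ begins with the letter $1$ and contains no further $1$, so in the fixed point $u$ the letter $1$ occurs exactly at the block-starting positions $p_j=\sum_{i<j}|\zeta^{(n)}(u_i)|$; since $u_0u_1u_2u_3u_4=14142$ one has $p_1=2$ and $p_5=4n+17$, and $\gcd(2,4n+17)=1$. For the \emph{irrational} eigenvalues I would invoke the algebraic characterization of eigenvalues of primitive substitutions (see, e.g., \cite{Queff}): such an eigenvalue $e^{2\pi i\alpha}$ forces the length sequences $\ell_k(b):=|(\zeta^{(n)})^k(b)|$ to satisfy a convergence condition modulo $1$ for all letters $b$; writing $\ell_k(b)$ in the eigenbasis of $\Sf_{\zeta^{(n)}}$ one has $\ell_k(b)=c_b\theta_1^k+d_b\theta_1^{-k}+2\,\mathrm{Re}(z_b e^{ik\varphi})$ with $c_b>0$ \emph{not all equal} (as $\zeta^{(n)}$ has non-constant length), and since $p_n$ is irreducible of degree $4$ and not cyclotomic the conjugates $e^{\pm i\varphi}$ of $\theta_1$ are not roots of unity, while $\theta_1$ being a Salem number makes $\{\theta_1^k\}$ dense modulo $1$. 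Consequently, for $b,b'$ with $c_b\ne c_{b'}$ the sequence $\{\alpha(\ell_k(b)-\ell_k(b'))\}$ cannot converge modulo $1$ unless $\alpha=0$, so $1$ is the only eigenvalue and the $\Z$-action is weakly mixing.

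\medskip

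The essential mathematical input --- the dimension estimate underlying Theorem~\ref{thm1} and the Mahler-measure bound underlying Lemma~\ref{lem-ex1} --- is already available, so the remaining obstacle is comparatively mild, and for the singularity statement largely bookkeeping: unwinding the six-fold composition (\ref{def-sub}) to identify $\Sigma_1,\Sigma_2$ and multiplying out the two $4\times4$ matrices so that the product of squared Frobenius norms comes out exactly $88$ (whence the stated $922>(6+2\sqrt5)\cdot 88$), together with the elementary bound $\theta_1>n+1$. The one genuinely delicate point is the weak-mixing argument: the rational part must be extracted from the explicit combinatorics of the fixed point rather than from the spectrum of $\Sf_{\zeta^{(n)}}$, and the irrational part additionally relies on the Salem property of $\theta_1$ together with the irreducibility of $p_n$ --- which is why weak mixing, unlike singularity, holds for all $n\ge 1$, since both of these hold for every $n$.
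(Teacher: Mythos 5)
Your singularity argument is correct and is essentially the paper's own proof: the same decomposition $\zeta^{(n)}=\Sigma_1\circ(\wt\zeta_{43})^n\circ\Sigma_2$ with $\Sigma_1=\zeta_{14}\circ\sig_3^2\circ\zeta_{24}$ and $\Sigma_2=\zeta_{34}\circ\sig_2^3$, the same Frobenius norms ${\|\Sf_{\Sigma_1}\|}_F^2=11$ and ${\|\Sf_{\Sigma_2}\|}_F^2=8$ (your explicit computation of the two composite substitutions checks out), Lemma~\ref{lem-ex1}(i) combined with Lemma~\ref{lem-vspom1}, and a lower bound $\theta_1>n$ extracted from the reciprocal equation for $p_n$; the hypotheses of Theorem~\ref{thm1} (primitivity, aperiodicity, irreducibility of $p_n$, no root-of-unity eigenvalues) are verified exactly as in the paper's Example. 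This half is sound.

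The weak-mixing half is where you diverge from the paper, and both of your arguments have genuine gaps. For the rational eigenvalues you compute a ``height'': the occurrences of the letter $1$ in the fixed point sit at positions with $\gcd$ equal to $1$. But the theorem that the rational eigenvalues are exactly the $h$-th roots of unity for the height $h$ is a \emph{constant-length} statement (Dekking, as presented in \cite{Queff}); for a non-constant length substitution, coprimality of the return times to a single letter does not by itself exclude a continuous eigenfunction with rational eigenvalue, since the cylinder $[1]$ need not lie in a single atom of the cyclic clopen partition that such an eigenfunction induces. The paper instead uses Host's necessary condition \cite[(6.3)]{Host86}, namely $\alpha\cdot|(\zeta^{(n)})^m(2)|\in\N$ for large $m$, and exploits that $p_n$ has constant term $1$ so the integer length recurrence can be run backwards to force $\alpha\in\N$. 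For the irrational eigenvalues your sketch never pins down the necessary condition being applied, and the algebraic inputs are not quite the right ones: Salem's theorem on the density of $\{\theta_1^k\}$ modulo $1$ says nothing about $\{\alpha c_b\,\theta_1^k\}$ for an arbitrary real multiplier --- what one actually needs is Pisot's theorem (for algebraic non-Pisot $\theta$ there is no $\lambda\ne 0$ with $\|\lambda\theta^k\|\to 0$), together with control of the oscillating contribution of the two conjugates on the unit circle, and a justification that the coefficients $c_b$ are not all equal. The paper bypasses all of this by quoting the ready-made criterion of \cite[(6.3)]{Host86}: since $\theta_1$ is a degree-$4$ Salem number, three of its four conjugates lie outside the open unit disk --- more than half --- so there are no irrational eigenvalues. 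Your instinct that the Salem property is what makes weak mixing hold for every $n$ is correct, but the argument should be routed through Host's criterion (or a complete proof of it) rather than through a height computation and density modulo $1$.
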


This is, of course, far from being sharp, and shows limitations of our method. It is easy to improve the numbers somewhat, but a substantial improvement would require computer use.

\begin{proof}
The result on weak mixing, that is, absence of a non-trivial discrete spectrum, is well-known; in fact, the same argument works for all ``Salem'' substitutions. %It follows from results of Host \cite{Host-subs} on eigenvalues for substitution systems.
 In fact,
we can use a corollary of Host's characterization of eigenvalues for substitution $\Z$-actions, stated in  \cite[(6.3)]{Host86}, that if the minimal polynomial of the Perron-Frobenius eigenvalue $\theta_1$ has more than half of its roots outside the open unit disk, then there are no eigenvalues of the form $\exp(2\pi i\alpha)$, with $\alpha\not\in \Q$. In our case only one root, $\theta_4$, has absolute value less than one, hence the criterion applies. As for eigenvalues $\exp(2\pi i\alpha)$, with $\alpha\in \Q$,   by \cite[(6.3)]{Host86} again, it is necessary that $\alpha\cdot |(\zeta^{(n)})^m(2)| \in \N$ for $m$ sufficiently large.
The sequence $\{\alpha\cdot |(\zeta^{(n)})^m(2)|\}_{m=0}^\infty$ satisfies the linear recurrence relation with the characteristic polynomial $p_n$, and since $p_n$ has constant terms equal to one, we can use the linear recurrence ``backwards'' to obtain that $\alpha\cdot |(\zeta^{(n)})^0(2)|=\alpha\in \N$, hence the eigenvalue is trivial. This completes the proof of weak mixing.

For the proof of singularity, we apply Lemma~\ref{lem-ex1}(i). Here $d=4$, and we obtain 
\begin{eqnarray*}
\int_{\T^4} \log\bigl\|\Mc_{\zeta^{(n)}}(\xi)\bigr\|_F\,dm_d(\xi) 
& \le &  \half\log\bigl(6 + 2\sqrt{5}\bigr) +  \log{\|\Sf_{\zeta_{14}\sig_3^2\zeta_{24}}\|}_F + \log{\|\Sf_{\zeta_{34} \sig_2^3}\|}_F  \\
& = & \half\log\bigl(6 + 2\sqrt{5}\bigr) + \half(\log 11 + \log 8).
\end{eqnarray*}
It remains to apply Theorem~\ref{thm1} and Lemma~\ref{lem-vspom1}, keeping in mind that $\theta_1 > n$.
\end{proof}

\medskip

{
\appendix \section{On the Mahler measure of a polynomial} Mahler \cite{Mahler} defined the  measure of a polynomial in several variables as follows:
$$
%\be \label{eq-Mahler}
M(P):= \exp \int_{\T^n} \log|P(z_1,\ldots,z_n)| \,d\bt,
$$
%\ee
where $\bt = (t_1,\ldots,t_n)$ and $z_j = e^{2\pi i t_j}$. Now $M(P)$ is called the {\em Mahler measure} and  
\be \label{eq-Mahler}
\mfr(P) = \log M(P)
\ee
is the {\em logarithmic Mahler measure} of the polynomial $P$.
For a polynomial of a single variable $p(z)$, Jensen's formula yields
\be \label{eq-Mahler2}
M(p) = \exp \int_0^1 \log|p(e^{2\pi i t}|\,dt = |a_0|\cdot \prod_{j\ge 1} \max\{|\alpha_j|,1\},
\ee
where $a_0$ is the leading coefficient and $\alpha_j$ are the complex zeros of $p$. Observe that $M(p)\ge |a_0|$, hence writing
$$
P(z_1,\ldots,z_n) = a_0(z_1,\ldots,z_{n-1}) z_n^k + \cdots + a_k(z_1,\ldots,z_{n-1})
$$
and integrating $\log |P(z_1,\ldots,z_n)|$  with respect to $t_n$, we obtain $M(P)\ge M(a_0)$, and then by induction it follows that
\be \label{eq-Mahler3}
\mbox{\em if}\ P(z_1,\ldots,z_n)\ \mbox{\em has integer coefficients, then}\  M(P)\ge 1,
\ee
see e.g., \cite[p.\,117]{Boyd} and \cite{EW} for details and for additional information on the Mahler measure.
}

\bigskip

{\bf Acknowledgement:} We are deeply grateful to Mike Hochman for fruitful discussions of equidistribution results, to Adam Kanigowski for helpful comments, to Nir Lev for the reference to the paper by R. Goldberg, to Misha Sodin, Rotem Yaari, and the anonymous referee for valuable remarks which helped improve the article.
The research of A. Bufetov has received funding from the European Research Council (ERC) under the European Union's Horizon 2020 
research and innovation programme under grant agreement No 647133 (ICHAOS).  A. Bufetov has also been funded by ANR grant  ANR-18-CE40-0035 REPKA.
The research of B. Solomyak was  supported by the Israel Science Foundation (grant 911/19).
B.S. gratefully acknowledges support from the Simons Center for Geometry and Physics, Stony Brook University, where some of the research for this paper was done. 

\bigskip


\begin{thebibliography}{99}

\bibitem{BFGR} Baake, Michael; Frank, Natalie Priebe; Grimm, Uwe; Robinson, E. Arthur, Jr.
Geometric properties of a binary non-Pisot inflation and absence of absolutely continuous diffraction. {\em
Studia Math.} {\bf 247} (2019), no.\ 2, 109--154.

\bibitem{BG} Baake, Michael; Grimm, Uwe. Squirals and beyond: substitution tilings with singular continuous spectrum. {\em Ergodic Theory Dynam.\ Systems} {\bf 34} (2014), 1077--1102.

\bibitem{BGM} Baake, Michael; Grimm, Uwe; Ma\~nibo, Neil. Spectral analysis of a family of binary inflation rules. {\em Lett.\ Math.\ Phys.} {\bf 108} (2018), no.\ 8, 1783--1805.

\bibitem{BG2M} Baake, Michael; G\"ahler, Franz; Ma\~nibo, Neil.
 Renormalisation of pair correlation measures for primitive inflation rules and absence of absolutely continuous diffraction. {\em Comm. Math. Phys.} {\bf 370} (2019), no.\ 2, 591--635.

\bibitem{Bartlett} Bartlett, Alan. Spectral theory of $\Z^d$ substitutions. {\em Ergodic Theory Dynam. Systems} {\bf 38} (2018), 1289--1341. 

\bibitem{BR}  Berend, Daniel; Radin, Charles. Are there chaotic tilings? {\em Comm.\ Math.\ Physics} {\bf 152}(2) (1993), 215--219.

\bibitem{BerSol} Berlinkov, Artemi; Solomyak, Boris.
Singular substitutions of constant length. {\em Ergodic Theory Dynam. Systems} {\bf 39} (2019), 2384--2402.

\bibitem{Boyd}
Boyd, David W. Kronecker's theorem and Lehmer's problem for polynomials in several variables. {\em J.\ Number Theory} {\bf 13} (1981), no.\ 1, 116--121.

\bibitem{BSU} Bufetov,  Alexander I.; Sinai, Ya. G.; Ulcigrai, Corinna.
A condition for Continuous Spectrum of an Interval Exchange Transformation. {\em American Mathematical Society Translations} (2) {\bf 217} (2006), 23--35.

\bibitem{Bufetov_jams} Bufetov, Alexander I.
Decay of correlations for the Rauzy--Veech--Zorich induction map 
on the space of interval exchange transformations and the 
Central Limit Theorem for the Teichm{\"u}ller Flow 
on the moduli space of Abelian differentials. {\em Journal of the American Mathematical Society} {\bf 19} (2006), no.\ 3, 579--623.


\bibitem{BuSo14} Bufetov,  Alexander I.; Solomyak, Boris.
On the modulus of continuity for spectral measures in substitution dynamics. {\em Adv.\ Math.} {\bf 260} (2014), 84--129.

\bibitem{BuSo19}  Bufetov,  Alexander I.; Solomyak, Boris.
A spectral cocycle for substitution systems and translation flows. {\em J. d'Analyse}, to appear; {\em Preprint} arXiv: 1802.04783.

\bibitem{CG97}
Camelier, Ricardo; Guttierez, Carlos. Affine interval exchange transformations with wandering intervals. {\em Ergodic Theory  Dynam. Systems} {\bf 17}(6), 1315--1338.

\bibitem{CG} Chan, Lax; Grimm, Uwe.
Spectrum of a Rudin-Shapiro-like sequence.
{\em Adv. in Appl. Math.} {\bf 87} (2017), 16--23.

\bibitem{CG2} Chan, Lax; Grimm, Uwe.
Substitution-based sequences with absolutely continuous diffraction. {\em J. Phys.: Conf. Ser.} {\bf 809} (2017), No. 1.

\bibitem{CS} Clark, Alex;  Sadun, Lorenzo.
When size matters: subshifts and their related tiling spaces.
 {\em Ergodic Theory Dynam.\ Systems} {\bf 23} (2003), 1043--1057.
 
 
 \bibitem{CGM}  Cobo, Milton;  Guti\'errez-Romo, Rodolfo; Alejandro Maass, Alejandro. Characterization of minimal sequences associated with self-similar interval exchange maps. {\em Nonlinearity}
 {\bf 31} (4) (2018), 1121--1154.
 
 \bibitem{DL} Danilenko, Alexandre; Lema{\'n}czyk, Mariusz. Spectral multiplicities for ergodic flows.
 {\em Discrete Contin.\ Dyn.\ Syst.} {\bf 33} (2013), no.\ 9, 4271--4289.

\bibitem{DK} Dekking F. M.;  Keane, M. Mixing properties of substitutions. {\em Zeit.\ Wahr.\ Verw.\ Gebiete} {\bf 42} (1978),  23--33.

 \bibitem{Dworkin} Dworkin, Steven.
Spectral theory and x-ray diffraction.
{\em J.\ Math.\ Phys.} {\bf 34} (1993), no.\ 7, 2965--2967.

\bibitem{EW} Everest, Graham; Ward, Thomas. {\em Heights of Polynomials and Entropy in Algebraic Dynamics}. Springer,
London (1999).

\bibitem{Siegel} Fogg,  N. Pytheas. {\em Substitutions in dynamics, arithmetics and combinatorics}, Edited by V. Berth\'e, S. Ferenczi, C. Mauduit and A. Siegel,  Lecture Notes in Math., vol.\ 1794, Springer, Berlin, 2002.

\bibitem{Frank} Frank, Natalie Priebe. Substitution sequences in $\Z^d$ with a non-simple Lebesgue component in the spectrum. {\em Ergodic Theory Dynam.\ Systems} {\bf  23} (2003),  519--532.
 
\bibitem{FK} Furstenberg, Harry; Kesten, Harry. Products of Random Matrices.
 {\em Ann.\ Math.\ Statist.} {\bf 31} (1960), no.\ 2, 457--469.
 
 \bibitem{Goldberg} Goldberg, Richard. Restrictions of Fourier transforms and extension of Fourier sequences.
 {\em J. Approximation Theory} {\bf 3} (1970), 149--155.
 
 \bibitem{Host86}  Host, Bernard. Valeurs propres des syst\`emes dynamiques
d\'efinis par des substitutions de longueur variable. {\em Ergodic Theory Dynam.\ Systems} {\bf 6} (1986),  529--540.
 
\bibitem{Host} Host, Bernard. Some results on uniform distribution in the multidimensional torus.
{\em Ergodic Theory Dynam.\ Systems} {\bf 20} (2000), 439--452.

\bibitem{Kingman} Kingman, John F. C.
The ergodic theory of subadditive stochastic processes.
{\em J.\ Roy.\ Statist.\ Soc.\ Ser.\ B} {\bf 30} (1968), 499--510.

\bibitem{Lanneau} Lanneau, Erwan. Infinite sequence of fixed point free pseudo-Anosov homeomorphisms on a family of genus two surfaces.
{\em Contemporary Math.} {\bf 532} (2010), 231--242.

\bibitem{Mahler} Mahler, Kurt. On some inequalities for polynomials in several variables.
{\em J.\ London Math.\ Soc.} {\bf 37} (1962), 341--344.

\bibitem{MaMoYo} Matheus, Carlos; M\"oller, Martin; Yoccoz, Jean-Christophe.
 A criterion for the simplicity of the Lyapunov spectrum of square-tiled surfaces. {\em Invent.\ Math.} {\bf 202} (2015), no.\ 1, 333--425. 

\bibitem{Meiri} Meiri, David. Entropy and uniform distribution of orbits in $\T^d$.
{\em Israel J.\ Math.} {\bf 105} (1998), 155--183.

\bibitem{Rauzy} 
Rauzy, G\'erard.
\'Echanges d'intervalles et transformations induites. 
{\em Acta Arith.} {\bf 34} (1979), no.\ 4, 315--328.

\bibitem{Queff} Queffelec, Martine. {\em Substitution Dynamical Systems - Spectral Analysis}. Second edition. Lecture Notes in Math., vol.\ 1294, Springer, Berlin, 2010.

\bibitem{Salem} Salem, Raphael. {\em Algebraic Numbers and Fourier analysis}. D. C. Heath and Co., 1963.

\bibitem{SolTil} Solomyak, Boris; Dynamics of self-similar tilings. {\em Ergodic Theory Dynam.\ Systems} {\bf 17} (1997), no. 3, 695--738 and 
{\em Ergodic Theory Dynam.\ Systems} {\bf 19} (1999) 1685 (erratum).

\bibitem{Steele}
Steele, J. Michael.
Kingman's subadditive ergodic theorem. 
{\em Ann.\ Inst.\ H.\ Poincar\'e Probab.\ Statist.} {\bf 25} (1989), no.\ 1, 93--98.

\bibitem {Veech82}
Veech, William A.
Gauss measures for transformations on the space of interval exchange maps.
{\em Annals of Mathematics} (2) {\bf 115} (1982), no.\ 1, 201--242.

\bibitem{Viana} Viana, Marcelo. Lectures on 
Interval Exchange Transformations and Teichm\"uller Flows, {\em Preprint IMPA}, 2008.

\bibitem{Yoccoz}
Yoccoz, Jean-Christophe.
Interval exchange maps and translation surfaces. In:
{\em Homogeneous flows, moduli spaces and arithmetic}, 1--69,
Clay Math. Proc., 10, Amer. Math. Soc., Providence, RI, 2010.

\bibitem{Zorich}
Zorich, Anton.
Flat surfaces. In: {\em Frontiers in number theory, physics, and geometry. I}, 437--583, Springer, Berlin, 2006.

\end{thebibliography}
\end{document}